\newtheorem{rem}{Remark}[section]
\newtheorem{thm}{Theorem}[section]
\newcommand{\reff}[1]{{\rm (\ref{#1})}}
	\newcommand\be {\begin{equation}}
	\newcommand\ee {\end{equation}}
\newcommand{\ve}{\varepsilon}          % varepsilon
\numberwithin{equation}{section}
\title{ Local Structure-Preserving Relaxation Method for Charged Systems on Unstructured Meshes}
\date{\today}
\begin{document}
	
\author{
Zhonghua Qiao\thanks{Department of Applied Mathematics and Research Institute for Smart Energy, The Hong Kong Polytechnic University, Hung Hom, Hong Kong. (zhonghua.qiao@polyu.edu.hk)},~
\and
Zhenli Xu\thanks{School of Mathematical Sciences, MOE-LSC, and CMA-Shanghai and Shanghai Center for Applied Mathematics, Shanghai Jiao Tong University, Shanghai 200240, China. (xuzl@sjtu.edu.cn)},~
\and
Qian Yin\thanks{School of Mathematical Sciences, MOE-LSC, and CMA-Shanghai and Shanghai Center for Applied Mathematics, Shanghai Jiao Tong University, Shanghai 200240, China. (sjtu\_yinq@sjtu.edu.cn)},~
\and
Shenggao Zhou\thanks{School of Mathematical Sciences, MOE-LSC, and CMA-Shanghai and Shanghai Center for Applied Mathematics, Shanghai Jiao Tong University, Shanghai 200240, China. (sgzhou@sjtu.edu.cn)}
}

\maketitle

\begin{abstract}	
This work considers charged systems described by the modified Poisson--Nernst--Planck (PNP) equations, which incorporate ionic steric effects and the Born solvation energy for dielectric inhomogeneity. Solving the steady-state modified PNP equations poses numerical challenges due to the emergence of sharp boundary layers caused by small Debye lengths, particularly when local ionic concentrations reach saturation. To address this, we first reformulate the steady-state problem as a constraint optimization, where the ionic concentrations on unstructured Delaunay nodes are treated as fractional particles moving along edges between nodes. The electric fields are then updated to minimize the objective free energy while satisfying the discrete Gauss's law. We develop a local relaxation method on unstructured meshes that inherently respects the discrete Gauss's law, ensuring curl-free electric fields. Numerical analysis demonstrates that the optimal mass of the moving fractional particles guarantees the positivity of both ionic and solvent concentrations. Additionally, the free energy of the charged system consistently decreases during successive updates of ionic concentrations and electric fields. We conduct numerical tests to validate the expected numerical accuracy, positivity, free-energy dissipation, and robustness of our method in simulating charged systems with sharp boundary layers.

%This method transforms the original differential equation system into an energy minimization problem. The core of the algorithm design is based on the principles of mass conservation and Gauss law conservation. Our method involves two main steps. In the first step, concentrations between adjacent control volumes are updated in pairs and the corresponding electric field along each edge of the mesh are updated accordingly. In the second step, the electric field in each discrete Delaunay triangle is rotationally updated to achieve free curl. The optimal updates in concentrations and electric fields are computed to decrease the discrete energy most. Benefiting from this physical and conservative idea, our numerical analysis shows that the method can unconditionally preserve solution positivity, energy decreasing, mass conservation, and Gauss law. Such structure-preserving properties for convection-dominated problems, as well as numerical accuracy, are further verified in a variety of numerical examples, demonstrating impressive performance. Furthermore, the developed numerical method is also applied to study the effects of Debye length and strength of fixed charges that often lead to large convection.
\bigskip

\noindent
{\bf Key words and phrases}: Unconditional positivity; Sharp boundary layers; Local curl-free relaxation; Unstructured meshes

\noindent
\end{abstract}

\section{Introduction}
%Problem introduction:
%Literature review:
%Advantages and Contributions and Further improvements:
Charged systems play a crucial role in various practical applications, including biological ion channels~\cite{E:CP:98}, microfluidics~\cite{Schoch:RMP:08}, semiconductors~\cite{2012semiconductor}, and electrochemical devices~\cite{BTA:PRE:04}. The PNP theory, which describes the electro-diffusion of charged particles, is widely used in these contexts. However, the PNP theory assumes point-like charges and neglects the effects of particle size and Coulombic correlation, which limits its accuracy. To address these limitations, several modified theories have been proposed to incorporate these effects within the framework of the PNP theory~\cite{BAO:PRL:1997, HorngLinLiuBob_JPCB12, LJX:SIAP:2018, Wu:JCP:2002}. For example, the entropy of solvent molecules is considered to account for ionic size effects, using statistical mechanics applied to ions and solvent molecules on lattices~\cite{KBA:PRE:2007, BZLu_BiophyJ11}. Additionally, Coulombic ion correlations are incorporated by including self energies of ions~\cite{LJX:SIAP:2018, MXZ:SIAP:2021}.

Considerable efforts have been dedicated to the advancement of numerical techniques for simulating charged systems~\cite{Ding19JCP, HuHuang_NM20,LiuChun2020positivity,  LM2020, Metti:JCP:2016, Qian2021positive, SJXJ2021}. These numerical methods, ranging from finite difference to finite volume approaches, possess the ability to uphold various desirable properties at the discrete level, such as the positivity of numerical solutions, conservation of ionic mass, and dissipation of free energy. Such numerical advantages play a critical role in accurately simulating complex charged systems. Another desired feature is the capability to handle sharp boundary layers, in which ionic concentrations, electric fields, and dielectric coefficients may undergo significant variations. In such cases, stability becomes a major concern. The traditional central-differencing type of discretization may lead to severe spurious oscillations in sharp boundary layers due to the small Debye length. To address this issue, stabilized methods have been developed to mitigate spurious oscillations in cases dominated by convection~\cite{Bauer2012CMAME,Liu2020Entropy, Qiao2018IET, Stynes2005ActaN,Wang2002CMA}. For example, a stabilized finite element method based on the variational multiscale framework has been proposed to prevent oscillations in simulations of multi-ion transport in electrochemical systems~\cite{Bauer2012CMAME}. In the work by Qiao et al.~\cite{Qiao2018IET}, a Petrov-Galerkin least square method is introduced to simulate ion-flow environments, where a stabilization term based on the Péclet number and mesh size is incorporated into the Galerkin weak form~\cite{Qiao2018IET}. Additionally, the Scharfetter-Gummel fluxes, which automatically reduce to upwinding fluxes, have been widely employed in semiconductor simulations to handle large convection~\cite{Liu2020Entropy,Stynes2005ActaN, Wang2002CMA}.

In this study, we investigate modified PNP equations that incorporate ionic steric effects and the Born solvation energy arising from dielectric inhomogeneity. The inclusion of solvent concentration in the model to capture ionic steric effects poses additional challenges in the development of numerical schemes that preserve positivity. Moreover, the presence of Born solvation energy terms introduces additional convection in inhomogeneous dielectric environments. To handle the large convection, we have previously proposed structure-preserving numerical schemes utilizing Scharfetter--Gummel numerical fluxes for solving these modified PNP equations~\cite{Qiao2022NumANP}. While the positivity of ionic concentrations has been proven to be unconditionally preserved at the discrete level, the positivity of solvent concentration is not guaranteed, particularly when the ionic concentrations saturate locally in boundary layers.

We present a structure-preserving relaxation (SPR) method, for solving the steady-state behavior of complex charge systems characterized by sharp boundary layers resulting from the small Debye length. Instead of employing the traditional upwind-type discretization, our SPR method transforms the problem into a constrained optimization formulation, treating ionic concentrations as fractional particles that move along the edges of unstructured Delaunay meshes. To ensure the conservation of total ionic mass, the mass leaving one control volume enters adjacent volumes. The optimal mass of the moving fractional particle is determined in a manner that guarantees the positivity of both ionic concentrations and solvent concentration on Delaunay nodes. The electric fields are then updated to minimize the objective free energy while remaining on the constraint manifold. As a result, the system's free energy monotonically decreases with each update of ionic concentrations and electric fields. To achieve a curl-free electric field on unstructured meshes, we have developed a local relaxation method that inherently respects the discrete Gauss's law. This method extends the original approach, designed for rectangular finite-difference meshes, to unstructured meshes~\cite{BSD:PRE:2009,MR:PRL:2002}. This extension allows for flexible resolution of sharp boundary layers at irregular interfaces in complex charge systems and holds promise for applications in molecular simulations~\cite{PD:JPCM:2004} and plasma simulations~\cite{DIAMOND20211}. Numerical experiments are conducted to demonstrate the accuracy, structure-preserving properties, and robustness of the proposed method in simulations of charged systems with sharp boundary layers.

The rest of this paper is organized as follows. In Section 2, we present the modified Poisson--Nernst--Planck equations at the steady state and the equivalent constraint optimization formulation. In Section 3, we introduce the discretization mesh and relaxation algorithm. The corresponding numerical analysis on structure-preserving properties is given in Section 4. Numerical results are elaborated to demonstrate the performance of the proposed SPR algorithm in Section 5. Conclusions are given in Section 6.

%The unstructured mesh can flexibly adaptive to complicated geometry and high resolution.
%The advantages of unstructured meshes come with a cost of larger, more complex, data structures and more complex algorithms to achieve parallel scalability.

\section{Modified Poisson--Nernst--Planck equations}
%\subsection{Steady State}
Consider ion dynamics in an electrolyte solution that includes mobile ions of $M$ species confined in
a domain $\Omega$.
%in $\R^2$.
The static electric field $\bm{E}$ satisfies $\nabla \times\bm{E}=\bm{0}$ and the Gauss's law
\begin{equation}
	\label{gausslaw}
	\nabla \cdot \varepsilon_0 \varepsilon_r \bm{E}=\rho,
\end{equation}
where $\varepsilon_0$ and $\varepsilon_r$ are the vacuum dielectric permittivity and dielectric coefficient of the medium, and $\rho$ is the charge density defined as
$$\rho=\sum_{s=1}^M z_s e c_s+fe.$$
Here $c_s$ is the ionic concentration for the $s$th species $(s=1,\cdots,M)$, $z_s$ is the ionic valence, $e$ is the elementary charge, and $f$ is the distribution of fixed charges. For the confined system, we assume periodic boundary conditions (or zero-flux boundary conditions) for ionic concentrations. The total ionic mass is given by
\begin{equation}\label{Mass}
\int_{\Omega} c_s \,d\bm{x} = N_s,~ s= 1, \cdots, M,
\end{equation}
where $N_s$ is a given total number of the $s$th species of ions. The charge neutrality is assumed to guarantee the existence of $\bm{E}$ with periodic boundary conditions: $$\sum_{s=1}^M z_s N_s + \int_{\Omega} f d\bm{x}=0.$$

We consider modified Nernst--Planck equations that are developed for description of ion dynamics~\cite{Qiao2022ANP}: for $~s =1, \cdots, M,$
\begin{equation}
	\label{MNP}
\frac{\partial c_s}{\partial t}=\nabla \cdot \gamma_s \left[ \nabla c_s-\frac {z_s e c_s}{k_B T} \bm{E} -\frac{a_s^3}{a_0^3}c_s \nabla \log\left(a_0^3 c_0 \right)+ \frac{\left(z_s e\right)^2c_s}{8\pi k_B T a_s} \nabla \left(\frac{1}{\varepsilon_0 \varepsilon_r}-\frac{1}{\varepsilon_0 }\right) \right], 
\end{equation}
where $\gamma_s$ is the diffusion coefficient, $k_BT$ is the thermal energy, $a_s$ represents the ionic size, $a_0$ represents the solvent molecule size, and $c_0$ is the concentration of solvent molecule defined by
$$
c_0=\frac{1-\sum_{v=1}^M a_v^3c_v}{ a_0^3}.
$$
The first two terms on the right-hand side of Eq.~\reff{MNP} represent the ionic electro-diffusion under the influence of an electric field. The third term takes into consideration the effect of ionic size, while the last term describes the Born solvation energy, accounting for dielectric inhomogeneity.

It has been demonstrated that the ion dynamics governed by Eq.~\reff{MNP} with periodic boundary conditions corresponds to an $H^{-1}$-gradient flow of a free-energy functional, as shown in a recent study~\cite{Qiao2022ANP}:

\begin{equation}\label{dF/dt}
\frac{dF}{dt} = -\sum_{s=1}^M \int_{\Omega} \frac{1}{c_s} \left|\nabla c_s-\frac {z_s e c_s}{k_B T} \bm{E} -\frac{a_s^3}{a_0^3}c_s \nabla \log\left(a_0^3 c_0 \right)+ \frac{\left(z_s e\right)^2c_s}{8\pi k_B T a_s} \nabla \left(\frac{1}{\varepsilon_0 \varepsilon_r}-\frac{1}{\varepsilon_0 } \right) \right|^2   d\bm{x} \leq 0,
\end{equation}
where
\[
F=\int_{\Omega} \frac{\varepsilon_0 \varepsilon_r}{2} \left|\bm{E} \right|^2+ k_B T\sum_{s=0}^M c_s\left( \log \left(a_s^3 c_s\right) -1 \right) + \sum_{s=1}^M c_s \frac{\left(z_s e\right)^2}{8\pi a_s} \left(\frac{1}{\varepsilon_0 \varepsilon_r}-\frac{1}{\varepsilon_0} \right)d\bm{x}.
\]
By the energy dissipation~\reff{dF/dt}, one can find that the steady-state ionic concentrations satisfy
\[
\nabla c_s-\frac {z_s e c_s}{k_B T} \bm{E} -\frac{a_s^3}{a_0^3}c_s \nabla \log\left(a_0^3 c_0 \right)+ \frac{\left(z_s e\right)^2c_s}{8\pi k_B T a_s} \nabla \left(\frac{1}{\varepsilon_0 \varepsilon_r}-\frac{1}{\varepsilon_0 } \right) = \bm{0}.
\]

For ease of presentation, we derive dimensionless formulation via rescaling by characteristic concentration $c_B$, diffusion constant $\gamma_0$, length $L$, and Debye length $\lambda_D=\sqrt{\varepsilon_0 k_BT/\left(2e^2c_B\right)}$. Define $\tilde{\bm{x}} = \bm{x}/L$, $\tilde{t} = t\gamma_0 / ( L\lambda_D)$, $\tilde{\nabla}= L \nabla$, $\tilde{f}= f/(ec_B)$, $\tilde{\rho}= \rho/(ec_B)$, $\tilde{\gamma}_s = \gamma_s/ \gamma_0$, $\tilde{a}_s = a_s/L$, $\tilde{c}_s= c_s/c_B$, $\tilde{\bm{E}}=Le \bm{E} /(k_B T )$, and $\tilde{N}^{\ell} = N^{\ell}/(L^3 c_B)$. We drop the tildes over all new variables and obtain the steady state of the system governed by
\begin{equation}
	\label{ss-manp}
	\left\{
	\begin{aligned}
		&  \nabla c_s-z_sc_s\bm{E}-\frac{a_s^3}{a_0^3}c_s \nabla \log \left(a_0^3 c_0\right)+\chi \frac{z_s^2 c_s}{a_s} \nabla \left(\frac{1}{\varepsilon_r}-1\right) =\bm{0}, \\
				&\int_{\Omega} c_s \,d\bm{x} = N_s,~  s =1, \cdots, M,  	\\
		&\nabla \cdot 2\kappa^2 \varepsilon_r \bm{E}=\sum_{s=1}^M z_s c_s+f,\\
		& \nabla \times\bm{E}=\bm{0},\\
	\end{aligned}
	\right.
\end{equation}
where $\kappa=\lambda_D/L$ and $\chi = e^2/(k_B T 8\pi L \ve_0) $ are two dimensionless parameters related to length scales. Such a system corresponds to a unique minimizer of the constrained optimization problem:
\begin{equation}\label{ConOpt}
\begin{aligned}
\underset{c_1,\cdots,c_M,\bm{E}}{\min} \mathcal{F}(c_1,\cdots,c_M,\bm{E})&:=\int_\Omega \kappa^2 \varepsilon_r \left|\bm{E} \right|^2+ \sum_{s=0}^M \left(  c_s \log c_s  + \chi c_s \frac{z_s ^2}{ a_s} \frac{1}{\varepsilon_r}
	\right) d\bm{x}, \\
 \mbox{s.t. } K(c_1,\cdots,c_M,\bm{E})&:=\nabla \cdot 2\kappa^2 \varepsilon_r \bm{E}-\sum_{s=1}^M z_s c_s-f =0, \\
H_s(c_s)&:=\int_\Omega c_s d\bm{x}-N_s = 0.
\end{aligned}	
\end{equation}
%\begin{thm}
%The steady-state system~\reff{ss-manp} corresponds to a unique minimizer of the constrained optimization problem:
%
%%	Define a dimensionless free-energy functional
%% \begin{equation}\label{sfunc}
%%	\mathcal{F}(c_1,\cdots,c_M,\bm{E}):=\int_\Omega \kappa^2 \varepsilon_r \left|\bm{E} \right|^2+ \sum_{s=0}^M \left(  c_s \log c_s  + \chi c_s \frac{z_s ^2}{ a_s} \frac{1}{\varepsilon_r}
%%	\right) d\bm{x}.
%%\end{equation}
%%\begin{compactenum}
%%\item[\rm (1)] $\mathcal{F}(c_1,\cdots,c_M,\bm{E})$ is a convex functional;
%%\item[\rm (2)] The steady-state system~\reff{ss-manp} corresponds to the unique minimizer of $\mathcal{F}(c_1,\cdots,c_M,\bm{E})$.
%%\end{compactenum}
%\end{thm}
%\begin{proof}
For ease of presentation, we denote by $X=(X_{c_1}, \cdots, X_{c_M}, X_{\bm{E}})=(c_1,\cdots,c_M, \bm{E})$ and introduce perturbations $Y^{i}=(Y^{i}_{c_1}, \cdots, Y^{i}_{c_M} ,  Y^{i}_{\bm{E}})$ ($i=1, 2$) that satisfy the constraints
\[
\nabla \cdot 2\kappa^2 \varepsilon_r Y^{i}_{\bm{E}}-\sum_{s=1}^M z_s Y^i_{c_s} =0 \mbox{~and~} \int_\Omega Y^{i}_{c_s} d\bm{x}= 0.
\]
The first variation of $\mathcal{F}$ reads
\[
	\begin{aligned}
		\delta \mathcal{F} [X][Y^1]&:=\lim_{t\to 0}\frac{ \mathcal{F}[X+tY^1]-\mathcal{F}[X]}{t}  \\
		&= \int_\Omega 2 \kappa^2 \varepsilon_r Y^1_{\bm{E}} \cdot X_{\bm{E}} + \sum_{s=1}^M Y^1_{c_s} \left[ \log(a_s^3X_{c_s}) - \frac{a_3^3}{a_0^3} \log(a_0^3c_0) + \chi \frac{z_s ^2}{ a_s \ve_r} \right]  d\bm{x}.
	\end{aligned}
\]
Furthermore, the second variation of $\mathcal{F}$ reads
\[
\begin{aligned}
\delta^2 \mathcal{F} [X][Y^2, Y^1]&= \lim_{t\to 0}\frac{\delta \mathcal{F}[X+tY^2][Y^1]-\delta \mathcal{F}[X][Y^1]}{t} \\
&=\int_\Omega 2\kappa^2 \varepsilon_rY^1_{\bm{E}} \cdot Y^2_{\bm{E}}+\sum_{s=1}^M \frac{Y_{c_s}^1 Y^2_{c_s}}{X_{c_s}}+\sum_{s=1}^M \frac{a_s^6}{a_0^6}\frac{Y_{c_s}^1 Y^2_{c_s}}{c_0} d\bm{x}.
\end{aligned}
\]

It can be shown that $\delta^2 \mathcal{F} [X][Y^1, Y^1] > 0$, indicating that the functional $\mathcal{F}$ is strictly convex under the given constraints. The existence of a solution to the constrained optimization problem~\reff{ConOpt} can be proven using the direct method in the calculus of variations~\cite{Li:N:2009, Li:SJMA:2009}, while the uniqueness of the solution can be established by exploiting the strict convexity of $\mathcal{F}$. The unique minimizer is characterized by the Karush--Kuhn--Tucker (KKT) conditions~\cite{NocedalWright_Book1999}, which can be derived by constructing the Lagrangian function:

\begin{equation}
		\mathcal{L}:=\int_\Omega \left[ \kappa^2 \varepsilon_r \left|\bm{E} \right|^2+ \sum_{s=0}^M \left(  c_s \log c_s  + \chi c_s \frac{z_s ^2}{ a_s} \frac{1}{\varepsilon_r}
	\right)
		-\lambda_\phi K(c_1,\cdots,c_M,\bm{E})  \right] d\bm{x} -   \sum_{s=1}^M \lambda_s H_s(c_s). \label{func}
	\end{equation}
The unique minimizer is obtained by taking variation of $\mathcal{L}$ with respect to $\bm{E}$, $c_s$, $\lambda_\phi$, and $\lambda_s$ as follows:
	\begin{align}
		\label{lE}
		\frac{\delta \mathcal{L}}{\delta \bm{E}}=0 &\Rightarrow
		\bm{E}=-\nabla \lambda_\phi,\\
		\label{chemicalp}
		\frac{\delta \mathcal{L}}{\delta c_s}=0 &\Rightarrow
		\lambda_s=\log \left(a_s^3 c_s\right)-\frac{a_s^3}{a_0^3} \log \left(a_0^3 c_0\right)+2+\chi \frac{z_s^2}{a_s} \frac{1}{\ve_r}+z_s\lambda_\phi,  \\
		\label{lgauss}
		\frac{\delta \mathcal{L}}{\delta \lambda_\phi}=0 &\Rightarrow
		 \nabla\cdot 2\kappa^2\varepsilon_r \bm{E}=\sum_{s=1}^M z_s c_s+f,\\
		\label{lcons}
		\frac{\delta \mathcal{L}}{\delta \lambda_s}=0 &\Rightarrow
		\int_\Omega c_s d\bm{x}=N_s.
	\end{align}
	%The Gauss law~\eqref{gausslaw} and the mass conservation~\eqref{masscons} can be recovered by the variations with respect to $\lambda_\phi$ and $\lambda_s$.
	%The variation with respect to $c_s$ leads to the expression of the chemical potential
	%\begin{equation}\label{chemicalp}
	%	\lambda_s=\log \left(a_s^3 c_s\right)-\frac{a_s^3}{a_0^3} \log \left(a_0^3 c_0\right)+\chi \frac{z_s^2}{a_s} \left(\frac{1}{\ve_r}-1\right)+z_s\lambda_\phi.
	%\end{equation}
	%While the variation with respect to $\bm{E}$ gives
	%\begin{equation}
	%	\bm{E}=-\nabla \lambda_\phi.
	%\end{equation}
	Taking curl of Eq.~\eqref{lE}, one recovers
	\begin{equation}\label{curlE}
		\nabla\times \bm{E}=0.
	\end{equation}
	Taking gradient of ~\eqref{chemicalp} and using~\eqref{lgauss}, one recovers the whole steady-state system~\eqref{ss-manp}.

Based on the constraint optimization problem~\reff{ConOpt}, we propose an extension of the curl-free algorithm~\cite{BSD:PRE:2009,MR:PRL:2002}, which was initially designed for particle simulations on rectangular finite-difference meshes, to unstructured meshes. Additionally, we develop a structure-preserving local relaxation method to solve the system~\reff{ss-manp}. In contrast to conventional optimization methods, our local relaxation method is specifically tailored for unstructured meshes, ensuring that the concentrations and electric field are strictly confined to the constraint manifold, {where mass and Gauss's law are conserved}.

%We can minimize the energy~\eqref{Eenergy} to solve for the solutions of equations~\eqref{ss-manp}, which comes from the idea of variational approach~\cite{MR:PRL:2002}. The constraints of Gauss law and mass conservation law are always satisfied during the procedure of our method.
%Therefore one can omit the terms of Lagrange multipliers and minimize or solve the gradient flow of the sum of the first three terms for the electric field and the ionic concentrations, i.e.
%\begin{equation}\label{sfunc}
%	\mathcal{F}(c_1,\cdots,c_M,\bm{E})=\int_\Omega \left[ \kappa^2 \varepsilon_r \left|\bm{E} \right|^2+ \sum_{s=0}^M c_s\left(\log c_s  \right) + \chi c_s \frac{z_s ^2}{ a_s} \frac{1}{\varepsilon_r}
%	\right] d\bm{x}.
%\end{equation}
%Note that here we also omit the constant terms in energy by using mass conservation law~\eqref{masscons}.
%As mentioned above, it is equivalent to a constraint variational problem, where the free-energy functional~\reff{Eenergy} is minimized under the constraints of Gauss law~(\ref{gausslaw}) and mass conservation law~(\ref{masscons}).

\section{Relaxation Algorithm}

\subsection{Mesh and Discretization}
%The computational domain is irregular in many problems~\cite{KCG:BJ:99} and meshes are expected to refine flexibly in the area of high P\'{e}clet number~\cite{Horng2022Frontiers}.

Let $\Omega$ be a bounded convex polygonal domain covered by a primal Delaunay triangulation and dual Voronoi polygons. To simplify the presentation, we will focus on the 2D case, but the extension to 3D is straightforward. In this work, we adopt the notations, algorithm, and analysis results used in the mimetic finite difference (MFD) method \cite{Adler2021AFF,Rodrigo2015FEMFD,Vabishchevich2005MFD}.

%Delaunay triangulation: for each triangle all the other nodes are situated outside the circumcircle.
%The Voronoi polyhedron (polygon): for a separate node the Voronoi polyhedron is a set of points lying closer to this node than to all the other ones:
\begin{figure}[h!]
	\centering
	\includegraphics[scale=0.65]{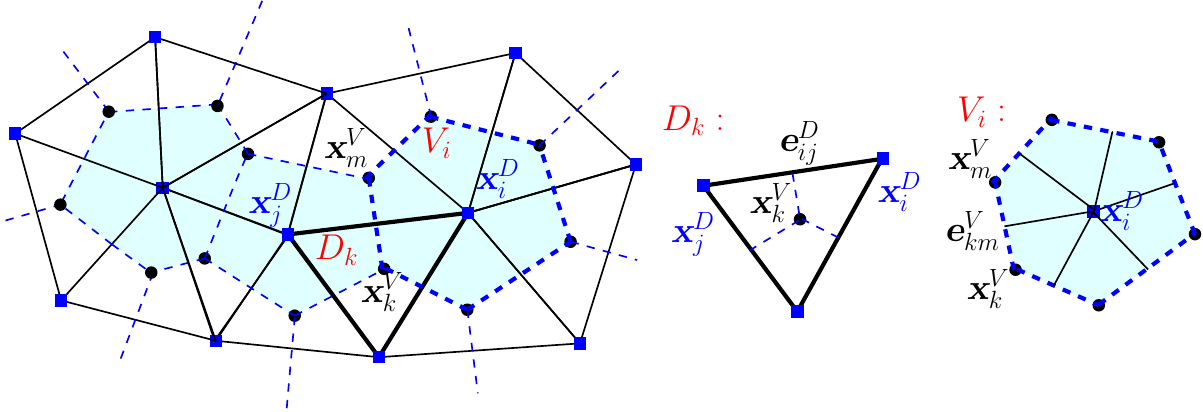}
	\caption{Delaunay triangulation $D_k$ with blue square vertices ($\bm{x}_i^{D}$, $\bm{x}_j^{D}$, $\cdots$) and dual Voronoi control volumes $V_i$ with black round vertices ($\bm{x}^V_k$, $\bm{x}^V_m$, $\cdots$). Each Voronoi vertex $\bm{x}^V_k$ is the circumcenter of the Delaunay element $D_k$. Vectors $\bm{e}_{i j}^{D}$ and $\bm{e}_{km}^{V}$ are unit vectors. }
	\label{f:mesh}
\end{figure}
As shown in Figure~\ref{f:mesh}, Delaunay nodes (D-nodes) are denoted as $\bm{x}^D_i$ for $i=1, 2,\cdots, M_D$, with $M_D$ being the total number of Delaunay nodes. Each Delaunay node $\bm{x}^D_i$ is associated with a Voronoi polygon
$$
V_{i}=\left\{\bm{x}|\bm{x} \in \Omega,| \bm{x}-\bm{x}_{i}^{D}|<| \bm{x}-\bm{x}_{j}^{D} \mid, j=1,2, \ldots, M_{D}\right\}, \quad i=1,2, \ldots, M_{D},
$$
and its boundary is denoted by $\partial V_i$.
%which is the control volume with boundary $\partial V_i$ for the node of D-grids $\bm{x}_i^{D}$.
The Voronoi vertices (V-nodes) are denoted by $\bm{x}^V_k$ for $k=1,2,\cdots M_V$, where $M_V$ is the total number of Voronoi nodes. Each Voronoi vertex, e.g., $\bm{x}^V_k$, is the circumcenter of the Delaunay element $D_k$, whose boundary is denoted by $\partial D_k$.
The common edge of the Delaunay elements $D_k$ and $D_m$ is denoted by $\partial D_{km}$, i.e.
$
\partial D_{k m}=\partial D_{k} \cap \partial D_{m}, \quad k \neq m$, $k, m=1,2, \ldots, M_{V} .
$
Analogously, the common edge of the Voronoi polygons $V_i$ and $V_j$ is defined by $\partial V_{ij}$. For each $D_{k}$, $\mathcal{W}^{D}(k)$ denotes the indices of neighboring Delaunay elements that share common edges with $D_{k}$, i.e.
$$
\mathcal{W}^{D}(k)=\left\{m \mid \partial D_{k} \cap \partial D_{m} \neq \emptyset, m=1, \ldots, M_{V}\right\}, \quad k=1,2, \ldots, M_{V}.
$$
Analogously, for each Voronoi polygon $V_{i}$, $\mathcal{W}^{V}(i)$ denotes the indices of neighboring Voronoi elements that share common edges with $V_{i}$.

The distances between connected D-nodes and V-nodes are defined by
$$
\begin{aligned}
	d_{ij}^D&=|\bm{x}_i^D-\bm{x}_j^D|,\quad i=1,2,\cdots ,M_D, j\in\mathcal{W}^V(i),\\
	d_{km}^V&=|\bm{x}_k^V-\bm{x}_m^V|,\quad k=1,2,\cdots ,M_V, m\in\mathcal{W}^D(k).
\end{aligned}
$$
The following measures are introduced
$$
\begin{aligned}
&m\left(D_{k}\right)=\int_{D_{k}} d \bm{x}, \quad m\left(\partial D_{k m}\right)=\int_{\partial D_{k m}} d \bm{x}, \quad k=1,2, \ldots, M_{V},  ~m\in \mathcal{W}^{D}(k),\\
&m\left(V_i\right)=\int_{V_i} d \bm{x}, \quad m\left(\partial V_{ij}\right)=\int_{\partial V_{ij}} d \bm{x}, \quad i=1,2, \ldots, M_{D}, ~j\in \mathcal{W}^{V}(i).
\end{aligned}
$$
%Following the ideas of~\cite{Vabishchevich2005MFD, Rodrigo2015FEMFD, Adler2021AFF},
Scalar functions are approximated by piecewise-constant grid functions on the D-nodes. Let the function space
$$
H_{D}=\left\{u(\bm{x}) \mid u(\bm{x})=u_{i}^{D}:=u\left(\bm{x}_{i}^{D}\right), ~\forall \bm{x} \in V_i, ~ i=1,2, \ldots, M_{D}\right\},
$$
which is equipped with a scalar inner product and norm
\begin{equation}\label{scaprod}
(u,v)_{D}=\sum_{i=1}^{M_D} u_i^D v_i^D m(V_i), \quad \|u\|_{D}=(u,u)_{D}^{1/2}.
\end{equation}
%For the approximation of vector functions defined on the control volume, we use the components normal to the corresponding planes of the control volume, which actually are the projections on the Delaunay triangulation edges.
%These edges are directed from the node with a smaller number to the one with a larger number, i.e.
Unit vectors along edges pointing from a D-node with a smaller index to a neighboring one with a larger index are
$$
\bm{e}_{i j}^{D}=\bm{e}_{j i}^{D}, \quad i=1,2, \ldots, M_{D}, \quad j \in \mathcal{W}^{V}(i).
$$
Vector functions $\bm{u}(\bm{x})$ are approximated along Delaunay edges by the projected values on midpoints of the edges:
$$
u_{i j}^{D}=\bm{u}(\bm{x}_{i j}^{D}) \cdot \bm{e}_{i j}^{D}, \quad i=1,2, \ldots, M_{D},  ~j \in \mathcal{W}^{V}(i),\quad
\bm{x}_{i j}^{D}=\frac{1}{2}\left(\bm{x}_{i}^{D}+\bm{x}_{j}^{D}\right).
$$
We denote by $\bm{H}_D$ the set of grid vector functions determined by $u_{i j}^{D}$ for $i, j =1,2, \ldots, M_{D}$.
The scalar product and the norm in $\bm{H}_{D}$ are defined by
\begin{equation}\label{vecprod}
\begin{aligned}
	(\bm{u}, \bm{v})_{D} &= \sum_{k=1}^{M_{V}} \sum_{m \in \mathcal{W}^{D}(k)} \sum_{(i, j) \in \mathcal{Q}^{D}(k, m)} u_{i j}^{D} v_{i j}^{D}\left|\bm{x}_{m}^{V}-\bm{x}_{k m}^{V}\right| m\left(\partial D_{k m}\right), \\
	\|\bm{u}\|_{D} &=(\bm{u}, \bm{u})_{D}^{1/2},
\end{aligned}
\end{equation}
where
$$
\mathcal{Q}^{D}(k, m)=\left\{(i, j) \mid \bm{x}_{i}^{D}, \bm{x}_{j}^{D} \in \partial D_{k m}, i=1,2, \ldots, M_{D}, j \in \mathcal{W}^{V}(i)\right\}
$$
is the set of vertices of the plane $\partial D_{k m}$, and $\bm{x}_{km}^V$ is a point defined as the intersection of the Voronoi edge $\bm{x}_{m}^{V}$ $\bm{x}_{k}^{V}$ and the plane $\partial D_{km}$. In 2D, $\bm{x}_{km}^V$ is coincided with $\bm{x}_{ij}^{D}$.

Denote by $c_{s}^h(\bm{x})\in H_D,s=0,1,\cdots M,$ the approximation of ionic concentrations $c_{s}(\bm{x})$, with $c_{s}^h(\bm{x})=c_{s,i}^D:=c_{s}^h(\bm{x}_i^D)$, $\forall \bm{x} \in V_i$, $i=1,2,\cdots,M_D$. Analogously, one defines a piecewise-constant function $f^h \in H_D$ for the distribution of fixed charges. Let $\bm{E}^h(\bm{x})\in \bm{H}_D$ be the approximation of  electric fields,  with $E^D_{ij}:=\bm{E}^h(\bm{x}_{ij}^D)\cdot \bm{e}_{i j}^{D}$.
The ionic mass conservation~\eqref{Mass} is discretized by
\begin{equation}\label{dismasscons}
	\sum_{i=1}^{M_D} c_{s,i}^{D} m(V_i)=N_s,\quad \forall \text{ } s=1,\cdots,M.
\end{equation}
For the scalar dielectric coefficient $\varepsilon_r^h(\bm{x})\in H_D$, an average operator $\mathcal{A}$ is introduced which is defined by
% and the average dielectric constant $\mathcal{A}(\varepsilon^h)$ is defined on the midpoints of the edges of Delaunay triangles, where the average operator $\mathcal{A}$ is defined as
%$$
%\mathcal{A}(\varepsilon^h)\left(\bm{x}\right)=\mathcal{A}(\varepsilon^h)\left(\bm{x}_{ij}^D\right)=\varepsilon_{ij}^D=\frac{\varepsilon_{i}^D+\varepsilon_{j}^D}{2}, i=1,\cdots M_D,j\in\mathcal{W}^{V}(i).
%$$
$$
\mathcal{A}(\varepsilon_r^h)\left(\bm{x}_{ij}^D\right)=\varepsilon_{ij}^D:=\frac{\varepsilon_{i}^D+\varepsilon_{j}^D}{2},~ i=1,\cdots M_D, ~j\in\mathcal{W}^{V}(i).
$$
In addition, one introduces a sign function
$$
\tau_{\{i<j\}}=\begin{cases}
	1,&\quad i<j,\\
	-1,& \quad i>j.
\end{cases}$$
The Gauss's law~\eqref{gausslaw} is discretized as
\begin{equation}\label{disgausslaw}
	 2\kappa^2 \left(\operatorname{div}_D \mathcal{A}(\varepsilon^h) \bm{E}^h\right)_i^D=\sum_{s=1}^M  z_s c_{s,i}^D +f^{D}_{i},\quad \forall \text{ } i=1,\cdots,M_D,
\end{equation}
where the discrete divergence operator $\operatorname{div}_D:\bm{H}_D \to H_D$ is defined by
\begin{equation}\label{disdiv}
\left(\operatorname{div}_D \bm{y}\right)_i^D=\frac{1}{m( V_i)} \sum_{j\in \mathcal{W}^V(i)} \left(\bm{n}^V_{ij}\cdot \bm{e}^D_{ij} \right) y_{ij}^D  m(\partial V_{ij}) = \frac{1}{m( V_i)} \sum_{j\in \mathcal{W}^V(i)} \tau_{\{i<j\}}  y_{ij}^D  m(\partial V_{ij}).
\end{equation}
Here, $\bm{y}\in \bm{H}_D$ and $\bm{n}^V_{ij}$ is the normal vector to the edge $\partial V_{ij}$ with respect to $V_i$.

%\begin{equation}\label{disgausslaw}
%	%\int _{\partial V_i } 2\kappa^2  \varepsilon \bm{E} \cdot \bm{n} dS= \int_{V_i} \sum_s z_s c_s +f d\bm{x},
%	2\kappa^2 \sum_{j\in \mathcal{W}^V(i)} \tau_{\{i<j\}} \varepsilon_{ij}^D E_{ij}^D m(\partial V_{ij})=\left(\sum_{s=1}^M z_s c_{s,i}^D +f^{D}_{i}\right) m( V_i),\quad \forall i=1,\cdots, M_D,
%\end{equation}

By the definition of discrete inner products, the energy functional defined in~\eqref{ConOpt} can be discretized by
\begin{equation}\label{disf}
	\mathcal{F}_h= \kappa^2 \left(\mathcal{A}(\varepsilon^h)\bm{E}^h,\bm{E}^h\right)_{D}+\sum_{s=0}^M \left(c_s^h,\log c_s^h\right)_D
	+\chi\sum_{s=1}^M \frac{z_s^2}{a_s} \left(c_s^h,\frac{1}{\varepsilon^h}\right)_D.
\end{equation}
In order to solve the constrained optimization problem~\reff{ConOpt}, we employ a local update scheme for the ionic concentrations and electric fields. The ionic concentrations, represented as fractional particles on nodes, are moved between adjacent Voronoi control volumes in order to minimize the discrete energy function~\eqref{disf}. Simultaneously, the electric fields on the edges connecting these control volumes are updated to ensure that the discrete Gauss's law is preserved.

%{\color{red} In our structure-preserving relxation (SPR) method, ionic concentrations and electric fields are updated by minimizing this discrete energy functional~\eqref{disf}, and at the same time they are satisfying mass conservation law~\eqref{dismasscons} and Gauss's law \eqref{disgausslaw}.}

\subsection{Updates of Concentrations and Electric Fields}\label{ss:CEUpdate}
To solve the system \reff{ss-manp}, we employ a constrained optimization approach that minimizes the discrete energy function \eqref{disf} while respecting the constraints specified in \reff{ConOpt}. Assuming that the initial data of the ionic concentrations and electric fields satisfy these constraints, we treat the ionic concentrations as fractional particles and propose a local update scheme. This scheme involves moving fractional particles from one Voronoi control volume to an adjacent one, with the objective of minimizing the discrete energy function \eqref{disf}. Simultaneously, we update the electric fields on the edges connecting adjacent control volumes to ensure that the discrete Gauss's law is maintained.

  \begin{figure}[ht]
  	\setlength{\abovecaptionskip}{-0.cm}
  	\setlength{\belowcaptionskip}{-0.cm}
  	\centering
  	\includegraphics[scale=0.65]{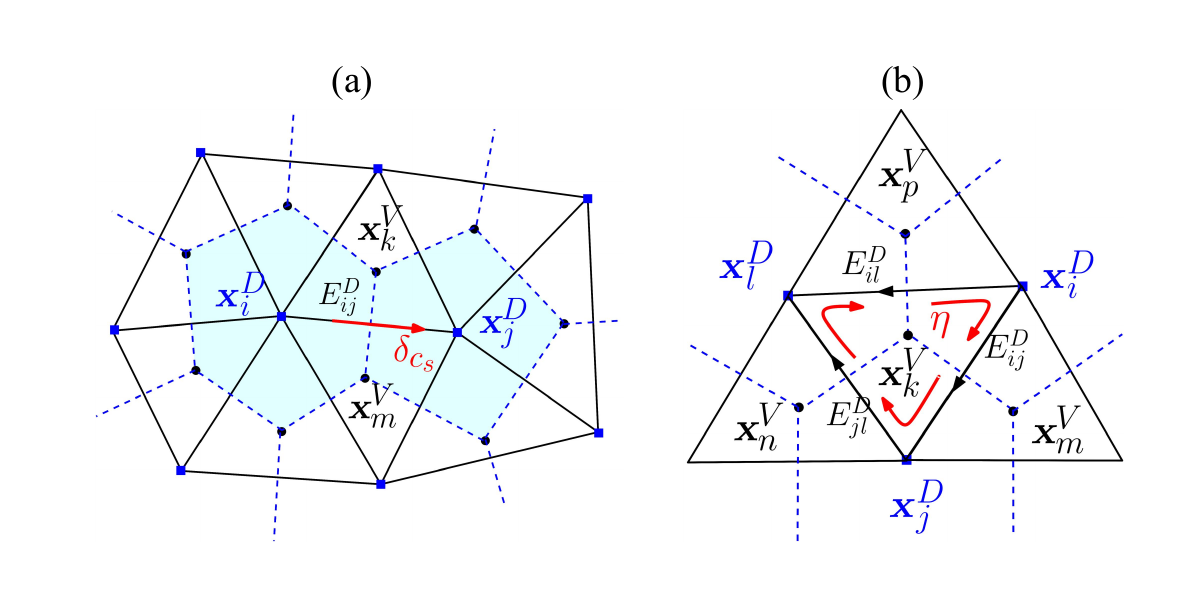}
  	%\hspace{2em}
  	%\includegraphics[scale=0.45]{curlfree.pdf}
  	\caption{(a): The diagram for updates of concentrations and corresponding electric fields. (b): The diagram for update of electric fields in a Delaunay element.}
  	\label{f:updatec}
  \end{figure}

%We shall propose local update schemes for ionic concentrations and electric fields
%The ionic concentrations update in pairs between the values defined on the two adjacent control volumes of the two points of each edge. The mass leaving from one control volume enters another one along each edge of Delaunay triangle, by which means the mass is conservative. Gauss law is still satisfied in that the corresponding electric field defined on the edge is updated accordingly.

Consider two adjacent D-nodes $\bm{x}^D_i$ and $\bm{x}^D_j$, which are associated with Voronoi polygons $V_i$ and $V_j$, respectively. Refer to the schematic diagram shown in Figure \ref{f:updatec} (a). Let the ionic concentrations of the $s$th species on the D-nodes be denoted as $c^{D}_{s,i}$ and $c^{D}_{s,j}$. The electric field along the edge $\bm{e}_{i j}^{D}$ is represented by $E_{ij}^D$. Without loss of generality, assume that $i<j$. Suppose that a certain number of fractional particles, with mass $\delta_{ c_s}$, for the $s$th ionic species move from $V_i$ to $V_j$.
%It can be verified that the discrete total mass~\eqref{dismasscons} is conservative precisely.
Thus, $c^{D}_{s,i}$ and $c^{D}_{s,j}$ are updated by
 \begin{equation}\label{deltac}
 	\tilde{c}^{D}_{s,i} =c^{D}_{s,i}-\frac{\delta_{c_s}}{m(V_i)} \text { and } \tilde{c}^{D}_{s,j} =c^{D}_{s,j}+\frac{\delta_{ c_s}}{m(V_j)}.
 \end{equation}
Correspondingly, the solvent concentrations $c^{D}_{0,i}$ and $c^{D}_{0,j}$ are updated by
\begin{equation}\label{deltac0}
	\tilde{c}^{D}_{0,i} =c^{D}_{0,i}+\frac{a_s^3}{a_0^3}\frac{\delta_{ c_s}}{m(V_i)} \text { and } \tilde{c}^{D}_{0,j} =c^{D}_{0,j}-\frac{a_s^3}{a_0^3}\frac{\delta_{ c_s}}{m(V_j)}.
\end{equation}
Here the tilde signs indicate the variables after update.
By the discrete Gauss's law~\eqref{disgausslaw}, the electric field across edge $\partial V_{ij}$ is updated by
	\begin{equation}\label{checkE}
		\check{E}_{ij}^D =E_{ij}^D+\delta_E,
	\end{equation}
	where
   \begin{equation}
	\delta_E =-\frac{z_s \delta_{ c_s}}{2\kappa^2\varepsilon_{ij}^D m(\partial V_{ij})}.
   \end{equation}
   Here the $\check{E}_{ij}^D$ is an interim step for the electric fields, and will be further updated in the next step, detailed in Section~\ref{ss:CurlFree}.
	As a result, the associated change in the energy~\eqref{disf} reads
	\begin{equation}\label{disfunc1}
	\begin{aligned}
		\Delta \mathcal{F}_1(\delta_{ c_s})=& \kappa^2 \varepsilon_{ij}^D d_{km}^V m(\partial D_{km})\left(2E_{ij}^D \delta_E +\delta_E^2\right)-\delta_{ c_s} \log  \frac{c_{s,i}^D-\delta_{ c_s}/m(V_i)}{c_{s,j}^D+\delta_{ c_s}/m(V_j)} \\
		&+ m(V_i) c_{s,i}^D \log \left[ 1-\frac{\delta_{ c_s}}{c_{s,i}^Dm(V_i)} \right]
		+ m(V_j) c_{s,j}^D \log \left[ 1+\frac{\delta_{ c_s}}{c_{s,j}^Dm(V_j)} \right] \\
		&+\frac{a_s^3\delta_{ c_s}}{a_0^3}\log \frac{1-\sum_{v=1}^M a_v^3 c_{v,i}^D+a_s^3 \delta_{ c_s}/m(V_i)}{1-\sum_{v=1}^M a_v^3 c_{v,j}^D-a_s^3 \delta_{ c_s}/m(V_j)}\\
		&+m(V_i)\frac{1-\sum_{v=1}^M a_v^3 c_{v,i}^D}{a_0^3}\log \left[1+\frac{a_s^3 \delta_{ c_s}}{m(V_i)\left(1-\sum_{v=1}^M a_v^3 c_{v,i}^D\right)}\right]\\
		&+m(V_j) \frac{1-\sum_{v=1}^M a_v^3 c_{v,j}^D}{a_0^3}\log \left[1-\frac{a_s^3 \delta_{ c_s}}{m(V_j)\left(1-\sum_{v=1}^M a_v^3 c_{v,j}^D\right)}\right]\\
		&-\frac{\chi z_s^2}{a_s}\delta_{ c_s} \left(\frac{1}{\varepsilon_i^D}-\frac{1}{\varepsilon_j^D}\right).
	\end{aligned}
    \end{equation}
$\Delta \mathcal{F}_1$ is a convex function of $\delta_{ c_s}$ as is shown in Theorem~\ref{thm-positive}. Thus, the optimal $\delta_{ c_s}^*$ is determined by minimizing the convex energy: $d \Delta \mathcal{F}_1/ d \delta_{ c_s}=0$, i.e.,
	\begin{equation}
		\begin{aligned}
			&- d_{km}^V m(\partial D_{km})\frac{z_s}{ m(\partial V_{ij})}\left(E_{ij}^D -\frac{z_s \delta_{ c_s}^*}{2\kappa^2\varepsilon_{ij}^D m(\partial V_{ij})} \right) -\log  \frac{c_{s,i}^D-\delta_{ c_s}^*/m(V_i)}{c_{s,j}^D+\delta_{ c_s}^*/m(V_j)}\\
			&+\frac{a_s^3}{a_0^3}\log \frac{1-\sum_{v=1}^M a_v^3 c_{v,i}^D+a_s^3 \delta_{ c_s}^*/m(V_i)}{1-\sum_{v=1}^M a_v^3 c_{v,j}^D-a_s^3 \delta_{ c_s}^*/m(V_j)}-\frac{\chi z_s^2}{a_s} \left(\frac{1}{\varepsilon_i^D}-\frac{1}{\varepsilon_j^D}\right)=0.
		\end{aligned}
	\label{equF'}
	\end{equation}

Theorem~\ref{thm-positive} provides a proof that the optimal mass $\delta_{ c_s}^*$, which satisfies equation~\reff{equF'}, ensures the numerical positivity of the updated ionic concentrations, namely $c^{D}_{s,i}-\delta_{ c_s}^*/m(V_i)$ and $c^{D}_{s,j}+\delta_{ c_s}^*/m(V_j)$, as well as the solvent concentrations $(1-\sum_{v=1}^M a_v^3 c_{v,i}^D+a_s^3 \delta_{ c_s}^*/m(V_i))/a_0^3$ and $(1-\sum_{v=1}^M a_v^3 c_{v,j}^D-a_s^3 \delta_{ c_s}^*/m(V_j))/a_0^3$. In numerical computations, the nonlinear scalar equation~\reff{equF'} is efficiently solved using the Newton's iteration method, incorporating a thresholding procedure to ensure numerical positivity.

%  \begin{figure}[ht]
%  	\setlength{\abovecaptionskip}{-0.cm}
%  	\setlength{\belowcaptionskip}{-0.cm}
%  	\centering
%  	\includegraphics[scale=0.65]{update.pdf}
%  	%\hspace{2em}
%  	%\includegraphics[scale=0.45]{curlfree.pdf}
%  	\caption{(a): The diagram of the pair of updates in concentrations and electric field. (b): The diagram of the updates of electric fields in one discrete triangle.}
%  	\label{f:updatec}
%  \end{figure}
%\begin{rem}
	%\begin{compactenum}
%		\item
%	     The biggest challenge for computing is the convection-dominated settings, including strong correlations and saturated or emptying concentrations, here we transfer the difficulty to an one-dimensional optimization problem.
	%\end{compactenum}
%\end{rem}

\subsection{Local Curl-free Algorithm}\label{ss:CurlFree}
Minimizing the discrete energy function~\eqref{disf} under the constraint of the electric field, as indicated in~\reff{lE}, results in the curl-free condition~\reff{curlE}. The discrete Gauss's law remains unchanged after the previous update on ionic concentrations and electric fields. Our next step is to further minimize the discrete energy function with respect to the electric field, while ensuring that the discrete Gauss's law is still satisfied.

%The electric fields should obey a diffusive propagation law in the absence of external charges and
%currents~\cite{M:JCP:2002} under an internal torque~\cite{MR:PRL:2002} after updates in the previous step. We continue minimizing the energy functional~\eqref{disf} with keeping the Gauss law~\eqref{disgausslaw} to get curl-free electric fields.

Consider a Delaunay element $D_k$ with nodes $\bm{x}_i^D$, $\bm{x}_{j}^D$, and $\bm{x}_{l}^D$, as shown in Figure~\ref{f:updatec} (b). The interim electric fields along the edges are denoted by $\check{E}_{ij}^D$, $\check{E}_{jl}^D$, and $\check{E}_{il}^D$, which are updated according to equation~\reff{checkE}. Let $D_m$, $D_n$, and $D_p$ be the adjacent Delaunay elements with circumcenters $\bm{x}^V_m$, $\bm{x}^V_n$, and $\bm{x}^V_p$, respectively. Without loss of generality, we assume $i<j<l$. As depicted in Figure~\ref{f:updatec} (b), the black arrows along the edges of $D_k$ indicate the predefined directions of the unit vectors. To maintain the discrete Gauss's law at each D-node, we introduce a uniform electric flux $\eta$ (positive or negative) rotating clockwise along the edges of each Delaunay element.
Specifically, the field updates are performed as

		\begin{equation}
			\begin{aligned}
				&\tilde{E}_{ij}^D=\check{E}_{ij}^D+\frac{\eta}{\varepsilon_{ij}^D m(\partial V_{ij})},\\
				&\tilde{E}_{jl}^D=\check{E}_{jl}^D+\frac{\eta}{\varepsilon_{jl}^D m(\partial V_{jl})},\\
				&\tilde{E}_{il}^D=\check{E}_{il}^D-\frac{\eta}{\varepsilon_{il}^D m(\partial V_{il})}.
			\end{aligned}
		\end{equation}
The induced change in energy function~\eqref{disf} reads
\begin{equation}\label{chgdisf}
		\Delta \mathcal{F}_2(\eta)={\kappa^2} \left[2\left(A_1 \check{E}_{ij}^D +A_2 \check{E}_{jl}^D -A_3 \check{E}_{il}^D \right)\eta+ \left( \frac{A_1}{\varepsilon_{ij}^D m(\partial V_{ij})} + \frac{A_2}{\varepsilon_{jl}^D m(\partial V_{jl})} + \frac{A_3}{\varepsilon_{il}^D m(\partial V_{il})} \right) \eta^2 \right],
\end{equation}
	where
	$$
	\begin{aligned}
		A_1&=d_{km}^V m(\partial D_{km})\frac{1}{ m(\partial V_{ij})},\\
		A_2&=d_{kn}^V m(\partial D_{kn})\frac{1}{ m(\partial V_{jl})},\\
		A_3&=d_{kp}^V m(\partial D_{kp})\frac{1}{ m(\partial V_{il})},
	\end{aligned}
	$$
	are mesh dependent variables.
Minimization of quadratic the function~\eqref{chgdisf} gives an explicit expression of the minimizer
	\begin{equation}\label{eta}
		\eta=-\frac{A_1 \check{E}_{ij}^D+A_2 \check{E}_{jl}^D-A_3 \check{E}_{il}^D}{A_1/(\varepsilon_{ij}^D m(\partial V_{ij}))+A_2/(\varepsilon_{jl}^D m(\partial V_{jl}))+A_3/(\varepsilon_{il}^D m(\partial V_{il}))}.
	\end{equation}
\begin{rem}
	%\begin{compactenum}
		%\item[\rm (1)]
		%Since the updated flux $\eta$ is expressed explicitly, this step has the linear complexity.
		%\item[\rm (2)]
		 Let us follow~\cite{Vabishchevich2005MFD} to define
%		the curl of the discrete vector $\bm{E}^h$ is defined by~\cite{Vabishchevich2005MFD}
		$$\operatorname{curl}_D: \bm{H}_D \to \bm{H}_V, \quad \left(\operatorname{curl}_D \bm{E}_{ij}^D\right)_{km}^V=\tau_{\{k<m\}}\frac{1}{m(\partial D_{km})}\sum_{(i,j)\in \mathcal{Q}^D_{km}}\chi\left(\bm{n}_{km}^D,\bm{e}^D_{ij}\right)E_{ij}^D d_{ij}^D.$$
		In 2D, one haves $A_1=d_{ij}^D$, $A_2=d_{jl}^D$, $A_3=d_{il}^D$, and $\left(\operatorname{curl}_D \bm{E}_{ij}^D\right)_{km}^V=\pm(E_{ij}^D d_{ij}^D+E_{jl}^D d_{jl}^D-E_{il}^D d_{il}^D)/m(\partial D_{km})$ after simplification. By~\reff{eta}, one can observe that the electric field is guaranteed to be curl-free when the algorithm converges to the minimizer such that $\eta=0$.
		%$$
		%\pm1_{\{k<m\}}\frac{1}{m(\partial D_{km})}\left(-E_{ij}^D d_{ij}^D -E_{jl}^D d_{jl}^D + E_{il}^D d_{il}^D\right)
		%$$
	%\end{compactenum}
\end{rem}
As long as the initial ionic concentration and electric fields satisfy the constraints, the abovementioned update schemes for ionic concentrations and electric fields respect them throughout the whole relaxation process. Algorithm~\ref{alg} combines the two update schemes, described in Sections~\ref{ss:CEUpdate} and~\ref{ss:CurlFree}, into a comprehensive algorithm for solving the constraint optimization problem~\reff{ConOpt}.

\begin{algorithm}[H]
	\label{alg}
	\caption{Structure-preserving relaxation (SPR) algorithm}
	Input: initial ionic concentrations, the electric field, and stopping criterion $\epsilon_{\rm tol}$
	\begin{algorithmic}[1]
		\While{Energy change $\Delta \mathcal{F}:=\sum \Delta \mathcal{F}_1+\sum \Delta \mathcal{F}_2 >$ $\epsilon_{\rm tol}$} % While?????????¡ê????????? ???????????EndWhile??????
		%\State Energy change$=0$;
		\State {\bf Step 1:}
		\For {each edge and each species of ionic concentrations}		
		\State Update concentrations ($\tilde{c}^{D}_{s,i} \leftarrow c^{D}_{s,i}$) and corresponding electric fields on the edge ($\check{E}_{ij}^D \leftarrow E_{ij}^D$) ; cf. Section~\ref{ss:CEUpdate}
		\State Compute associated energy change $\Delta \mathcal{F}_1$
		\EndFor
		\State {\bf Step 2:}
		\For {each Delaunay element}
		\State Perform local curl-free updates on electric fields ($\tilde{E}_{ij}^D \leftarrow  \check{E}_{ij}^D$); cf. Section~\ref{ss:CurlFree}
		\State Compute associated energy change $\Delta \mathcal{F}_2$
		\EndFor
		\EndWhile
	\end{algorithmic}
%	Output: ionic concentrations
\end{algorithm}

\begin{rem}
	%\begin{compactenum}
		%\item[\rm (1)]
%		The initial electric fields only need to satisfy Gauss law, and do not have to satisfy the curl-free condition.
		%\item[\rm (2)]
%		It is also direct to extend this algorithm to three dimensions.
		The proposed algorithm can be extended to 3D cases in a straightforward manner.
	%\end{compactenum}
\end{rem}

\section{Numerical Analysis}
The subsequent theorems aim to analyze the numerical properties of the proposed algorithm.
\begin{thm}\label{thm-masscons}
	The proposed algorithm preserves the discrete total mass~\eqref{dismasscons} and discrete Gauss's law~\eqref{disgausslaw}.
\end{thm}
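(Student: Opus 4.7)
The plan is to verify the two conservation properties separately for each of the algorithm's two update steps, since Step 1 modifies both concentrations and electric fields while Step 2 modifies only electric fields.

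For the total mass conservation \eqref{dismasscons}, Step 2 is trivial since concentrations are untouched. For Step 1, I would compute directly from \eqref{deltac}:
\begin{equation*}
\tilde{c}^{D}_{s,i} m(V_i) + \tilde{c}^{D}_{s,j} m(V_j) = c^{D}_{s,i} m(V_i) + c^{D}_{s,j} m(V_j),
\end{equation*}
so the weighted sum appearing in \eqref{dismasscons} is unchanged; all other D-nodes are untouched by the local edge update.

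For the discrete Gauss's law \eqref{disgausslaw}, the argument for Step 1 is the key computation. At a generic D-node $\bm{x}^D_r$ that is not an endpoint of the active edge $\bm{x}^D_i\bm{x}^D_j$, neither the concentrations nor any incident $E^D$ value changes, so \eqref{disgausslaw} remains valid. At the endpoint $\bm{x}^D_i$ (with $i<j$), using the definition \eqref{disdiv} of $\operatorname{div}_D$, the only term affected in $(\operatorname{div}_D \mathcal{A}(\varepsilon^h)\bm{E}^h)_i^D$ is the one on edge $\partial V_{ij}$, and its change equals
\begin{equation*}
\frac{1}{m(V_i)}\,\tau_{\{i<j\}}\,\varepsilon_{ij}^D\,\delta_E\,m(\partial V_{ij}) = -\frac{z_s\,\delta_{c_s}}{2\kappa^2\,m(V_i)},
\end{equation*}
by the explicit formula for $\delta_E$. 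Multiplying by $2\kappa^2$ yields exactly the change $z_s(\tilde{c}^D_{s,i}-c^D_{s,i})=-z_s\delta_{c_s}/m(V_i)$ in the right-hand side of \eqref{disgausslaw}. The same computation at $\bm{x}^D_j$ gives $\tau_{\{j<i\}}=-\tau_{\{i<j\}}$, producing the correct matching sign. Hence \eqref{disgausslaw} is preserved at every node after Step 1.

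For Step 2, the claim is that the local rotational update within the Delaunay element $D_k$ with vertices $\bm{x}^D_i,\bm{x}^D_j,\bm{x}^D_l$ preserves \eqref{disgausslaw} at each of these three nodes (and is trivial elsewhere). The key observation is that after multiplying each updated field $\tilde E^D_{ab}$ by $\varepsilon_{ab}^D m(\partial V_{ab})$, the flux contribution of the update is exactly $\pm\eta$ on each of the three edges, with signs $(+,+,-)$ matching the orientation in \eqref{chgdisf}. At each vertex, the net change in the divergence then becomes
\begin{equation*}
\frac{2\kappa^2\,\eta}{m(V_r)}\bigl(\tau_{\{r<*\}}\pm\tau_{\{r<**\}}\bigr)
\end{equation*}
over the two incident edges, and a direct case check assuming $i<j<l$ shows this vanishes at $r=i$, $r=j$, and $r=l$. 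Since the right-hand side of \eqref{disgausslaw} is untouched in Step 2, Gauss's law is preserved. The only subtle bookkeeping is keeping the sign function $\tau_{\{i<j\}}$ consistent with the prescribed clockwise rotation of $\eta$; I expect this to be the sole place careful case analysis is needed, and the rest is immediate from the explicit formulas for $\delta_E$ and the update rules.
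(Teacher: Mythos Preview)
Your proposal is correct and follows essentially the same approach as the paper: both verify mass conservation by direct cancellation of the $\pm\delta_{c_s}$ contributions in Step~1, then check the discrete Gauss's law at the affected endpoints in Step~1 using the explicit formula for $\delta_E$, and finally verify in Step~2 that the two $\pm\eta$ flux contributions at each vertex of $D_k$ cancel via the sign function $\tau_{\{\cdot<\cdot\}}$. The paper carries out the three-vertex case check for Step~2 explicitly under the assumption $i<j<l$, exactly as you anticipate.
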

\begin{proof}
%	The mass conversation is preserved by that the same mass is exchanged among the adjacent two control volumes. In specific,
	 One first goes through the Step 1. After the update of ionic concentrations, the ionic total mass of each species is given by
	\[
	\sum_{l=1}^{M_D} \tilde{c}_{s,l}^D m(V_l)=\sum_{l=1,l\neq i, j}^{M_D} c_{s,l}^D m(V_l)+\left(c_{s,i}^D-\frac{\delta_{ c_s}^*}{m(V_i)} \right)m(V_i)+\left(c_{s,j}^D+\frac{\delta_{ c_s}^*}{m(V_j)} \right)m(V_j)=\sum_{l=1}^{M_D} c_{s,l}^D m(V_l)=N_s.
	\]
	Thus, the discrete total mass is conserved in the relaxation algorithm. One next checks the discrete Gauss's law~\eqref{disgausslaw} at each D-node. After Step 1 in the algorithm, the discrete Gauss's law at the D-node $\bm{x}_i^D$ becomes
	\[
	\begin{aligned}
	2\kappa^2 \left(\operatorname{div}_D \mathcal{A}(\varepsilon^h) \check{\bm{E}}^h\right)_i^D &=
		  2\kappa^2\left(\operatorname{div}_D \mathcal{A}(\varepsilon^h) \bm{E}^h\right)_i^D+ 2\kappa^2\frac{1}{m(V_i)} \tau_{\{i<j\}}\varepsilon_{ij}^D \delta_{E} m\left(\partial V_{ij}\right) \\
		 &=
		 \sum_{s=1}^M  z_s c_{s,i}^D +f^{D}_{i} +  \frac{-z_s\delta_{ c_s}^* }{m(V_i)} \\
		 &= \sum_{s=1}^M  z_s \tilde{c}_{s,i}^D +f^{D}_{i}.
	\end{aligned}
	\]
%	And the discrete Gauss law~\eqref{disgausslaw} is realized by the electric field is adjusted according to Gauss law in step (1) and the same flux is rotated clockwise along the edges in step (2). Specifically, considering the Gauss law in the D-grid $\bm{x}_i^D$ in step (1), the variation in the left hand side of ~\eqref{disgausslaw} is exactly the variation in the right hand side
%	\[
%	2\kappa^2 \tau_{\{i<j\}}\varepsilon_{ij}^D \delta_{E} m\left(\partial V_{ij}\right)=2\kappa^2 \varepsilon_{ij}^D \frac{-z_s\delta_{ c_s}}{2\kappa^2 \varepsilon_{ij}^D m (\partial V_{ij})} m\left(\partial V_{ij}\right)=-z_s \delta_{ c_s}=z_s \frac{-\delta_{ c_s} }{m(V_i)}m(V_i).
%	\]

The discrete Gauss's law at the D-node $\bm{x}_j^D$ can be verified using the same approach. The effect of Step 2 on the discrete Gauss's law is then evaluated. After the update, the discrete Gauss's law at the D-node $\bm{x}_i^D$ is given by:

%	In step (2), the variations in the left hand sides of~\eqref{disgausslaw} in three D-grids $\bm{x}_i^D$, $\bm{x}_j^D$ and $\bm{x}_l^D$ respectively are
	\begin{align}
	2\kappa^2 \left(\operatorname{div}_D \mathcal{A}(\varepsilon^h) \tilde{\bm{E}}^h\right)_i^D &= 2\kappa^2 \left(\operatorname{div}_D \mathcal{A}(\varepsilon^h) \check{\bm{E}}^h\right)_i^D\\
	&\quad +\frac{2\kappa^2}{m(V_i)} \left[ \tau_{\{i<j\}}\varepsilon_{ij}^D  \frac{\eta}{\varepsilon_{ij}^D m(\partial V_{ij})}m(\partial V_{ij})+\tau_{\{i<l\}}\varepsilon_{il}^D \frac{-\eta}{\varepsilon_{il}^D m(\partial V_{il})}m(\partial V_{il})\right]\\
	&=\sum_{s=1}^M  z_s \tilde{c}_{s,i}^D +f^{D}_{i} +\frac{2\kappa^2}{m(V_i)}(\eta -\eta)= \sum_{s=1}^M  z_s \tilde{c}_{s,i}^D +f^{D}_{i}.
	\end{align}
Analogously, one has
	\begin{align}
	2\kappa^2 \left(\operatorname{div}_D \mathcal{A}(\varepsilon^h) \tilde{\bm{E}}^h\right)_j^D &= 2\kappa^2 \left(\operatorname{div}_D \mathcal{A}(\varepsilon^h) \check{\bm{E}}^h\right)_j^D\\
	&\quad +\frac{2\kappa^2}{m(V_j)} \left[ \tau_{\{j<i\}}\varepsilon_{ij}^D  \frac{\eta}{\varepsilon_{ij}^D m(\partial V_{ij})}m(\partial V_{ij})+\tau_{\{j<l\}}\varepsilon_{jl}^D \frac{\eta}{\varepsilon_{jl}^D m(\partial V_{jl})}m(\partial V_{jl})\right]\\
	&=\sum_{s=1}^M  z_s \tilde{c}_{s,j}^D +f^{D}_{j} +\frac{2\kappa^2}{m(V_j)}(-\eta +\eta)= \sum_{s=1}^M  z_s \tilde{c}_{s,j}^D +f^{D}_{j},
	\end{align}
and
	\begin{align}
	2\kappa^2 \left(\operatorname{div}_D \mathcal{A}(\varepsilon^h) \tilde{\bm{E}}^h\right)_l^D &= 2\kappa^2 \left(\operatorname{div}_D \mathcal{A}(\varepsilon^h) \check{\bm{E}}^h\right)_l^D\\
	&\quad +\frac{2\kappa^2}{m(V_l)} \left[ \tau_{\{l<i\}}\varepsilon_{il}^D  \frac{-\eta}{\varepsilon_{il}^D m(\partial V_{il})}m(\partial V_{il})+\tau_{\{l<j\}}\varepsilon_{jl}^D \frac{\eta}{\varepsilon_{jl}^D m(\partial V_{jl})}m(\partial V_{jl})\right]\\
	&=\sum_{s=1}^M  z_s \tilde{c}_{s,l}^D +f^{D}_{l} +\frac{2\kappa^2}{m(V_l)}(\eta -\eta)= \sum_{s=1}^M  z_s \tilde{c}_{s,l}^D +f^{D}_{l}.
	\end{align}
%	\begin{align}
%	& 2\kappa^2 \left[ \tau_{\{i<j\}}\varepsilon_{ij}^D  \frac{\eta}{\varepsilon_{ij}^D m(\partial V_{ij})}m(\partial V_{ij})+\tau_{\{i<l\}}\varepsilon_{il}^D \frac{-\eta}{\varepsilon_{il}^D m(\partial V_{il})}m(\partial V_{il})\right]=\eta-\eta=0,\\
%	& 2\kappa^2 \left[ \tau_{\{j<i\}}\varepsilon_{ij}^D  \frac{\eta}{\varepsilon_{ij}^D m(\partial V_{ij})}m(\partial V_{ij})+\tau_{\{j<l\}}\varepsilon_{jl}^D \frac{\eta}{\varepsilon_{jl}^D m(\partial V_{jl})}m(\partial V_{jl})\right]=-\eta+\eta=0,\\
%	& 2\kappa^2 \left[ \tau_{\{l<i\}}\varepsilon_{il}^D  \frac{-\eta}{\varepsilon_{il}^D m(\partial V_{il})}m(\partial V_{il})+\tau_{\{l<j\}}\varepsilon_{jl}^D \frac{\eta}{\varepsilon_{jl}^D m(\partial V_{jl})}m(\partial V_{jl})\right]=\eta-\eta=0,
%	\end{align}
Hence, the discrete Gauss's law is rigorously satisfied in two successive local relaxation steps.

\end{proof}

\begin{thm}\label{thm-positive}
There exists a unique minimizer $\delta_{ c_s}^*$ to function $\Delta \mathcal{F}_1$~\reff{disfunc1} in the interval $B:=(I_l, I_r)$, where $$I_l=\max \left\{ -c^{D}_{s,j} m(V_j), -m(V_i)\frac{1-\sum_{v=1}^M a_v^3 c_{v,i}^D}{a_s^3}\right\} \text{ and } I_r=\min \left\{ c^{D}_{s,i} m(V_i), m(V_j)\frac{1-\sum_{v=1}^M a_v^3 c_{v,j}^D}{a_s^3}\right\}$$ are chosen to guarantee the positivity of both the updated concentrations of ions~\eqref{deltac} and solvent molecules~\eqref{deltac0}.
% are positive, that is to say, the optimal updated amount $\delta_{ c_s}$ falls in the domain $B=\{\delta_{ c_s}|I_l< \delta_{ c_s}<I_r\}$,
\end{thm}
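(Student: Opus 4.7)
The plan is to establish strict convexity of $\Delta \mathcal{F}_1$ on the open interval $B=(I_l,I_r)$ and to show that $d\Delta \mathcal{F}_1/d\delta_{c_s}$ tends to $\pm\infty$ at the two endpoints. Strict convexity yields uniqueness, coercivity of the derivative yields existence, and together they pin down a unique minimizer $\delta_{c_s}^*$ in the interior of $B$. The positivity statement is then immediate because $B$ is, by construction, the set on which all four updated quantities $\tilde c^D_{s,i}$, $\tilde c^D_{s,j}$, $\tilde c^D_{0,i}$, $\tilde c^D_{0,j}$ (cf. \eqref{deltac}--\eqref{deltac0}) are strictly positive and every logarithm in \eqref{disfunc1} has positive argument, so $\Delta \mathcal{F}_1$ is smooth throughout $B$.

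For strict convexity, I would differentiate the left-hand side of \eqref{equF'} once more in $\delta_{c_s}$. Four contributions appear in $d^2\Delta \mathcal{F}_1/d\delta_{c_s}^2$: (i) the positive constant $d_{km}^V m(\partial D_{km})\, z_s^2/[2\kappa^2 \varepsilon_{ij}^D m^2(\partial V_{ij})]$ from the electrostatic quadratic; (ii) $1/[m(V_i)\tilde c^D_{s,i}]+1/[m(V_j)\tilde c^D_{s,j}]$ from the ionic entropy; (iii) $(a_s^6/a_0^6)\bigl(1/[m(V_i)\tilde c^D_{0,i}]+1/[m(V_j)\tilde c^D_{0,j}]\bigr)$ from the solvent entropy; (iv) zero from the Born term, which is linear in $\delta_{c_s}$. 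Each of (i)--(iii) is strictly positive on $B$, hence $\Delta \mathcal{F}_1$ is strictly convex on $B$ and admits at most one minimizer.

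For existence, I would study the endpoint behavior of $d\Delta \mathcal{F}_1/d\delta_{c_s}$ itself. As $\delta_{c_s}\nearrow I_r$ exactly one of two logarithm arguments collapses to $0^+$: if $I_r=c^D_{s,i}m(V_i)$, then $\tilde c^D_{s,i}\to 0^+$ and $-\log(\tilde c^D_{s,i}/\tilde c^D_{s,j})\to +\infty$; whereas if $I_r=m(V_j)(1-\sum_v a_v^3 c^D_{v,j})/a_s^3$, then $\tilde c^D_{0,j}\to 0^+$ and the solvent log term $(a_s^3/a_0^3)\log\bigl(a_0^3\tilde c^D_{0,i}/(a_0^3\tilde c^D_{0,j})\bigr)\to +\infty$. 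All remaining terms stay bounded, so in either case $d\Delta \mathcal{F}_1/d\delta_{c_s}\to +\infty$. A completely parallel case analysis at the left endpoint gives $d\Delta \mathcal{F}_1/d\delta_{c_s}\to -\infty$ as $\delta_{c_s}\searrow I_l$. By continuity and the intermediate value theorem a root $\delta_{c_s}^*\in B$ of \eqref{equF'} exists, and strict convexity upgrades it to the unique global minimizer of $\Delta \mathcal{F}_1$ on $B$.

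The main obstacle is the endpoint bookkeeping: since each of $I_l$ and $I_r$ is the minimum of two competing expressions, one has to walk through the four subcases to verify that the correct logarithmic term drives the derivative to the expected signed infinity while the electrostatic and Born contributions (both at most linear in $\delta_{c_s}$) together with the surviving log terms remain finite. Once this case split is laid out cleanly, the remainder reduces to the textbook strict-convex/coercive existence-uniqueness argument on the open interval $B$.
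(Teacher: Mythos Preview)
Your proposal is correct and follows essentially the same approach as the paper: both establish strict convexity via the second derivative (positive electrostatic, ionic-entropy, and solvent-entropy contributions on $B$) and then use the logarithmic blow-up at the endpoints, with the same case split on which expression realizes $I_l$ or $I_r$, to force the minimizer into the interior. The only cosmetic difference is packaging---the paper minimizes over a compact subinterval $[I_l+\gamma,I_r-\gamma]$ and rules out the boundary by checking the sign of $d\Delta\mathcal{F}_1/d\delta_{c_s}$ there, whereas you invoke the intermediate value theorem on the derivative directly.
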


\begin{proof}
	One first establishes the convexity of the energy change $\Delta \mathcal{F}_1$~\eqref{disfunc1}. Taking the second derivative of $\Delta \mathcal{F}_1$ with respect to $\delta_{ c_s}$, one has
	$$
	\begin{aligned}
		\frac{\partial^2 \Delta \mathcal{F}_1}{\partial \delta_{ c_s}^2}
		=&\frac{1}{2\kappa^2} \varepsilon_{ij}^D d_{km}^V m(\partial D_{km})\left( \frac{z_s }{\varepsilon_{ij}^D m(\partial V_{ij})} \right)^2
		+ \frac{1}{m(V_i)c_{s,i}^D-\delta_{ c_s}}  + \frac{1}{m(V_j)c_{s,j}^D+\delta_{ c_s}}\\
		&+\frac{a_s^6}{a_0^3}\frac{1}{m(V_i)\left(1-\sum_{v=1}^M a_v^3 c_{v,i}^D\right)+a_s^3 \delta_{ c_s}}+\frac{a_s^6}{a_0^3}\frac{1}{m(V_j)\left(1-\sum_{v=1}^M a_v^3 c_{v,j}^D\right)-a_s^3 \delta_{ c_s}}.
	\end{aligned}
	$$
Clearly, it is positive over the interval $B$. Therefore, the energy change~\eqref{disfunc1} is a strictly convex function over $B$.
 Let us consider a closed domain $B_{\gamma}=[I_l+\gamma, I_r-\gamma]\subset B$, where $0<\gamma<(I_r-I_l)/2$. It is clear that for any $\gamma$, there exists a minimizer of $\Delta \mathcal{F}_1$ within $B_{\gamma}$. We now aim to prove that when $\gamma$ is sufficiently small, the minimizer cannot be located at one of the boundary points of $B_{\gamma}$.
	
	Assume that the minimizer were achieved at $I_l+\gamma$. If $I_l=-c_{s,j}^Dm(V_j)$ and the minimizer $\delta_{ c_s}^*=-c^{D}_{s,j} m(V_j)+\gamma$, then
	$$
	\begin{aligned}
		\left.\frac{\partial \Delta \mathcal{F}_1}{\partial \delta_{ c_s}}\right|_{\delta_{ c_s}^*}=&- d_{km}^V m(\partial D_{km})\frac{z_s}{ m(\partial V_{ij})}\left(E_{ij}^D -\frac{z_s \delta_{ c_s}^*}{2\kappa^2\varepsilon_{ij}^D m(\partial V_{ij})} \right) -\log  \frac{c_{s,i}^D-\delta_{ c_s}^*/m(V_i)}{\gamma/m(V_j)}\\
		&+\frac{a_s^3}{a_0^3}\log \frac{1-\sum_{v=1}^M a_v^3 c_{v,i}^D+a_s^3 \delta_{ c_s}^*/m(V_i)}{1-\sum_{v=1}^M a_v^3 c_{v,j}^D-a_s^3 \delta_{ c_s}^*/m(V_j)}-\frac{\chi z_s^2}{a_s} \left(\frac{1}{\varepsilon_i^D}-\frac{1}{\varepsilon_j^D}\right).
	\end{aligned}
	$$
	It is easy to verify that $\left.\partial \Delta F_1/\partial \delta_{ c_s}\right|_{\delta_{ c_s}^{*}}<0$ when $\gamma$ is sufficiently small. This contradicts the assumption that $\delta_{ c_s}^*$ is a minimizer. Otherwise, if $I_l=-m(V_i)\left(1-\sum_{v=1}^M a_v^3 c_{v,i}^D\right)/ a_s^3$ and $\delta_{ c_s}^{*}=-m(V_i)\left(1-\sum_{v=1}^M a_v^3 c_{v,i}^D\right)/ a_s^3+\gamma$, then
	$$
	\begin{aligned}
		\left.\frac{\partial \Delta \mathcal{F}_1}{\partial \delta_{ c_s}}\right|_{\delta_{ c_s}^{*}}=&- d_{km}^V m(\partial D_{km})\frac{z_s}{ m(\partial V_{ij})}\left(E_{ij}^D -\frac{z_s \delta_{ c_s}^{*}}{2\kappa^2\varepsilon_{ij}^D m(\partial V_{ij})} \right) -\log  \frac{c_{s,i}^D-\delta_{ c_s}^{*}/m(V_i)}{c_{s,j}^D+\delta_{ c_s}^{*}/m(V_j)}\\
		&+\frac{a_s^3}{a_0^3}\log \frac{a_s^3 \gamma/m(V_i)}{1-\sum_{v=1}^M a_v^3 c_{v,j}^D-a_s^3 \delta_{ c_s}^{*}/m(V_j)}-\frac{\chi z_s^2}{a_s} \left(\frac{1}{\varepsilon_i^D}-\frac{1}{\varepsilon_j^D}\right).
	\end{aligned}
	$$

It can be shown that $\left.\partial \Delta F_1/\partial \delta_{ c_s}\right|_{\delta_{ c_s}^{*}}<0$ when $\gamma$ is sufficiently small. This leads to a contradiction. Similarly, it can be demonstrated that the minimizer of Eq.~\eqref{disfunc1} cannot be achieved at the right boundary point $I_r-\gamma$ when $\gamma$ is sufficiently small. Therefore, there exists a unique minimizer $\delta_{ c_s}^*$ of the function $\Delta \mathcal{F}_1$ in $B$. The uniqueness can be established by the strict convexity of $\Delta \mathcal{F}_1$. With the chosen values of $I_l$ and $I_r$, it is straightforward to verify that the updated concentrations of ions~\eqref{deltac} and solvent molecules~\eqref{deltac0} are guaranteed to be positive.

%	One thus proves that the resulted optimal mass $\delta_{ c_s}$ exists and updated concentrations are positive in the feasible domain $B$.
\end{proof}

\begin{thm}\label{thm-energy}
	The discrete energy~\eqref{disf} decays monotonically in successive relaxation steps of the proposed algorithm.
\end{thm}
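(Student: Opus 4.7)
The plan is to establish the monotone energy decay by analyzing the two local update steps of the algorithm separately, and to observe that each local update is itself a minimization of a strictly convex auxiliary functional whose value at the ``do nothing'' state is zero. Summing these pointwise bounds over all edges and all Delaunay elements then yields $\Delta \mathcal{F} \le 0$ for the full sweep.

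First, for Step 1, I would recall that the ionic update on a single edge $\partial V_{ij}$ induces an energy increment $\Delta \mathcal{F}_1(\delta_{c_s})$ given by~\reff{disfunc1}. Theorem~\ref{thm-positive} already shows that $\Delta \mathcal{F}_1$ is strictly convex on the admissible interval $B$ and admits a unique minimizer $\delta_{c_s}^*$ in the interior of $B$. The key observation is that $\delta_{c_s}=0$ lies in $B$ (since $0 \in (I_l,I_r)$ by the strict positivity of the current concentrations) and that $\Delta \mathcal{F}_1(0)=0$ by direct inspection of~\reff{disfunc1}. Therefore $\Delta \mathcal{F}_1(\delta_{c_s}^*) \le \Delta \mathcal{F}_1(0)=0$, with equality only if the equilibrium condition~\reff{equF'} is already met at $\delta_{c_s}=0$.

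Second, for Step 2, I would examine the quadratic $\Delta \mathcal{F}_2(\eta)$ in~\eqref{chgdisf}. The coefficient of $\eta^2$ is a sum of strictly positive terms $A_r/(\varepsilon^D_{\cdot\cdot} m(\partial V_{\cdot\cdot}))$, so $\Delta \mathcal{F}_2$ is strictly convex in $\eta$ and its closed-form minimizer is~\eqref{eta}. Again, $\Delta \mathcal{F}_2(0)=0$ trivially, so the minimum value satisfies $\Delta \mathcal{F}_2(\eta) \le 0$, with equality only when the numerator of~\eqref{eta} vanishes, i.e.\ when the discrete curl on the Delaunay element is already zero.

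Finally, I would combine these two bounds. Each sweep of Algorithm~\ref{alg} consists of a finite sequence of edge updates followed by a finite sequence of element updates, and by Theorem~\ref{thm-masscons} every intermediate configuration remains in the admissible constraint manifold, so each subsequent local update sees a valid state and the above analysis applies. Adding up the contributions gives
\begin{equation*}
\Delta \mathcal{F} \;=\; \sum \Delta \mathcal{F}_1(\delta_{c_s}^*) \;+\; \sum \Delta \mathcal{F}_2(\eta) \;\le\; 0,
\end{equation*}
which proves monotone decay. I expect the only subtle point to be making sure that when the updates are chained, the ``$\Delta \mathcal{F}_i(0)=0$'' reference state for each local update is taken relative to the configuration produced by the previous update, not the configuration at the start of the sweep; this is immediate provided one interprets $c_{s,i}^D$, $E_{ij}^D$ in~\reff{disfunc1} and~\eqref{chgdisf} as the current values at the moment of the update. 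No deeper estimate is required, since strict convexity plus a zero reference value is all that drives the monotonicity.
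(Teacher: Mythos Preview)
Your argument is correct and matches the paper's own proof essentially line for line: both use that $\Delta\mathcal{F}_1(\delta_{c_s}^*)\le\Delta\mathcal{F}_1(0)=0$ via the convexity established in Theorem~\ref{thm-positive}, and that the quadratic $\Delta\mathcal{F}_2$ has nonpositive minimum (the paper simply writes out the explicit minimum value, whereas you invoke $\Delta\mathcal{F}_2(0)=0$). Your extra remarks about $0\in B$ and about interpreting each local update relative to the current state are sound clarifications that the paper leaves implicit.
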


\begin{proof}
One first checks the Step 1. Since the optimal mass $\delta_{ c_s}^*$ is chosen by minimizing the energy change $\Delta \mathcal{F}_1$~\eqref{disfunc1}, one has
%	It is nature to verify that the energy is monotonely decreasing from the detail expression of the algorithm, in which the optimal mass $\delta_{ c_s}$ and optimal flux $\eta$ is derived to minimize the energy.
%	Specifically, assume the optimal mass computed in step (1) is $\tilde{\delta}_{ c_s}$,
	$$
	\left. \Delta \mathcal{F}_1 \right|_{\delta_{ c_s}=\delta_{ c_s}^*}\leq  \left. \Delta \mathcal{F}_1 \right|_{\delta_{ c_s}=0}=0
	$$
	by the convexity by Theorem~\ref{thm-positive}.
In Step 2, the energy change after update reads
	\[
	\Delta \mathcal{F}_2 =\kappa^2 \frac{-\left(A_1 E_{ij}^D +A_2 E_{jl}^D -A_3 E_{il}^D\right)^2}{A_1/(\varepsilon_{ij}^D m(\partial V_{ij}))+A_2/(\varepsilon_{jl}^D m(\partial V_{jl}))+A_3/(\varepsilon_{il}^D m(\partial V_{il}))},
	\]
	which is clearly less than or equal to zero.
As a result, the energy decays monotonically in successive relaxation steps of the proposed algorithm.
\end{proof}
\begin{rem}
Since $c_{s,i}^D \left(\log c_{s,i}^D + \theta \right) \geq - e^{-(\theta +1)}$ for a constant $\theta>0$, the discrete energy \reff{disf} is bounded below. It follows from Theorem~\ref{thm-energy} that the proposed algorithm is convergent.
\end{rem}

\section{Numerical Results}

%It is not proper to use Gauss law~\reff{disgausslaw} directly to get initial electric field. First, we should use the Poisson's equation
%\begin{equation}
%	-2\kappa^2 \operatorname{div}_D \left(\varepsilon^h \operatorname{grad}_D \phi^h\right)=\left(\sum_{s=1}^M z_s c_{s} +f\right)(\bm{x}),\bm{x}=\bm{x}^D_i \in\Omega.
%\end{equation}
%to get initial potential $\phi^h\in H_D$.
%Therefore the initial electric field is computed from
%\begin{equation}
%	E^h=-\operatorname{grad}_D (\phi^h) \in \bm{H}_D,
%\end{equation}
%where the discrete gradient operator is defined as $\operatorname{grad}_D:H_D\to \bm{H}_D$
%\begin{equation}\label{disgrad}
%	(\operatorname{grad}_D y)_{ij}^D=\tau_{\{i<j\}} \frac{y_j^D-y_i^D}{d_{ij}},i=1,\cdots,M_D,j\in \mathcal{W}^{V}(i), y^h \in H_D.
%\end{equation}
%If the initial electric field is known already, there is no need of this step.

\subsection{Accuracy Performance}

We perform a series of numerical tests to validate the effectiveness and accuracy of the proposed algorithm. In order to simplify the analysis, we initially neglect the effects of ionic size and Born solvation energy. For this simplified case, we construct an exact solution for binary monovalent ionic concentrations.

%this system reduces to the charge-conserving Poisson-Boltzmann (ccPB) equation
%\begin{equation}
%	\label{ccPB}
%	-\nabla \cdot 2\kappa^2 \varepsilon \nabla \phi=\sum_{\ell=1}^{M} \frac{z_s N_s}{\int_{\Omega} e^{-z_s \phi} d\bm{x}}  e^{-z_s \phi}  + f.
%\end{equation}

$$c_s=e^{-z_s\cos(\pi x) \sin (\pi y)}, (x,y)\in[0,2]^2, s=1,2,$$
and determines the distribution of fixed charges
$$
f=2\pi^2 \cos(\pi x) \sin(\pi y) -e^{-\cos(\pi x) \sin (\pi y)}+e^{\cos(\pi x) \sin (\pi y)}.
$$
We conduct numerical simulations with $\epsilon_{\text{tol}}=10^{-10}$, $z_1=+1$, and $z_2=-1$, and compute the $L^{2}$ and $L^{\infty}$ numerical errors against the exact solution by
\[
E_{2}=\sqrt{ \frac{\sum_{i=1}^{M_D} \left(c_s\left(\bm{x}_{s,i}^D\right)-c_{s,i}^D\right)^2}{M_D}},\quad E_{\infty}= {\max_{i=1,\cdots,M_D} \left(c_s\left(\bm{x}_{s,i}^D\right)-c_{s,i}^D\right)},\quad s=1,2.
\]
Numerical errors on different meshes are presented in Tables~\ref{table-l2} and~\ref{table-linf}, where $h_{\max}:=\max_{i=1,\cdots,M_D} d_{ij}^D$. It can be observed that both the $L^2$ and $L^\infty$ errors decrease significantly with mesh refinement, and the numerical convergence orders are approximately $2$, as expected.

%\begin{figure}[h!]
%	\centering
%	\includegraphics[scale=0.5]{acc.eps} \vspace{-3mm}
%	\caption{The $\ell ^ 2$ error and convergence order for the numerical solutions of $c_1$ and $c_2$.  {The tolerance $\varepsilon_{\text{tol}}=1$E$-10$.}}
%	\label{f:acc}
%\end{figure}

\begin{table}[H]
	\centering
	\setlength{\abovecaptionskip}{10pt}%
	\setlength{\belowcaptionskip}{0pt}%
	\begin{tabular}{ccccc}	
		\hline \hline
		$h_{\max}$  & $c_1$  & Order  & $c_2$    & Order  \\
		\hline
		0.06071   & 1.4674E-3 & -  & 1.4671E-3 & -      \\
		0.04917  & 1.0542E-3 & 1.5678 & 1.0541E-3 & 1.5672   \\
		0.03118  & 3.7572E-4 & 2.2650  & 3.7706E-4 & 2.2570      \\
		0.02483   & 2.3573E-4 & 2.0458 & 2.3712E-4 & 2.0356\\
		0.01414    & 6.9601E-5 & 2.1679 & 6.4498E-5 & 2.3136\\
		\hline	\hline
	\end{tabular}
	\caption{$L ^ 2$ error and convergence order for numerical solutions of $c_1$ and $c_2$.  }
	\label{table-l2}	
\end{table}

\begin{table}[t!]
	\centering
	\setlength{\abovecaptionskip}{10pt}%
	\setlength{\belowcaptionskip}{0pt}%
	\begin{tabular}{ccccc}	
		\hline \hline
		$h_{\max}$  & $c_1$  & Order  & $c_2$    & Order  \\
		\hline
		0.06071   & 5.0737E-3 & -  & 5.0495E-3 & -      \\
		0.04917   & 3.6383E-3 & 1.5764 & 3.6250E-3 & 1.5711   \\
		0.03118  & 1.4164E-3 & 2.2613  & 14411E-3 & 2.2400    \\
		0.02483    & 8.1461E-4 & 2.0476 & 8.2041E-4 & 2.0428\\
		0.01414    & 2.7218E-4 &  1.9481 & 2.2989E-4 & 2.2608\\
		\hline	\hline
	\end{tabular}
	\caption{$L ^ {\infty}$ error and convergence order for  numerical solutions of $c_1$ and $c_2$. }
	\label{table-linf}	
\end{table}

\subsection{Structure-preserving Properties} \label{num:prop}
\begin{figure}[t!]
	\centering
	\includegraphics[scale=0.45]{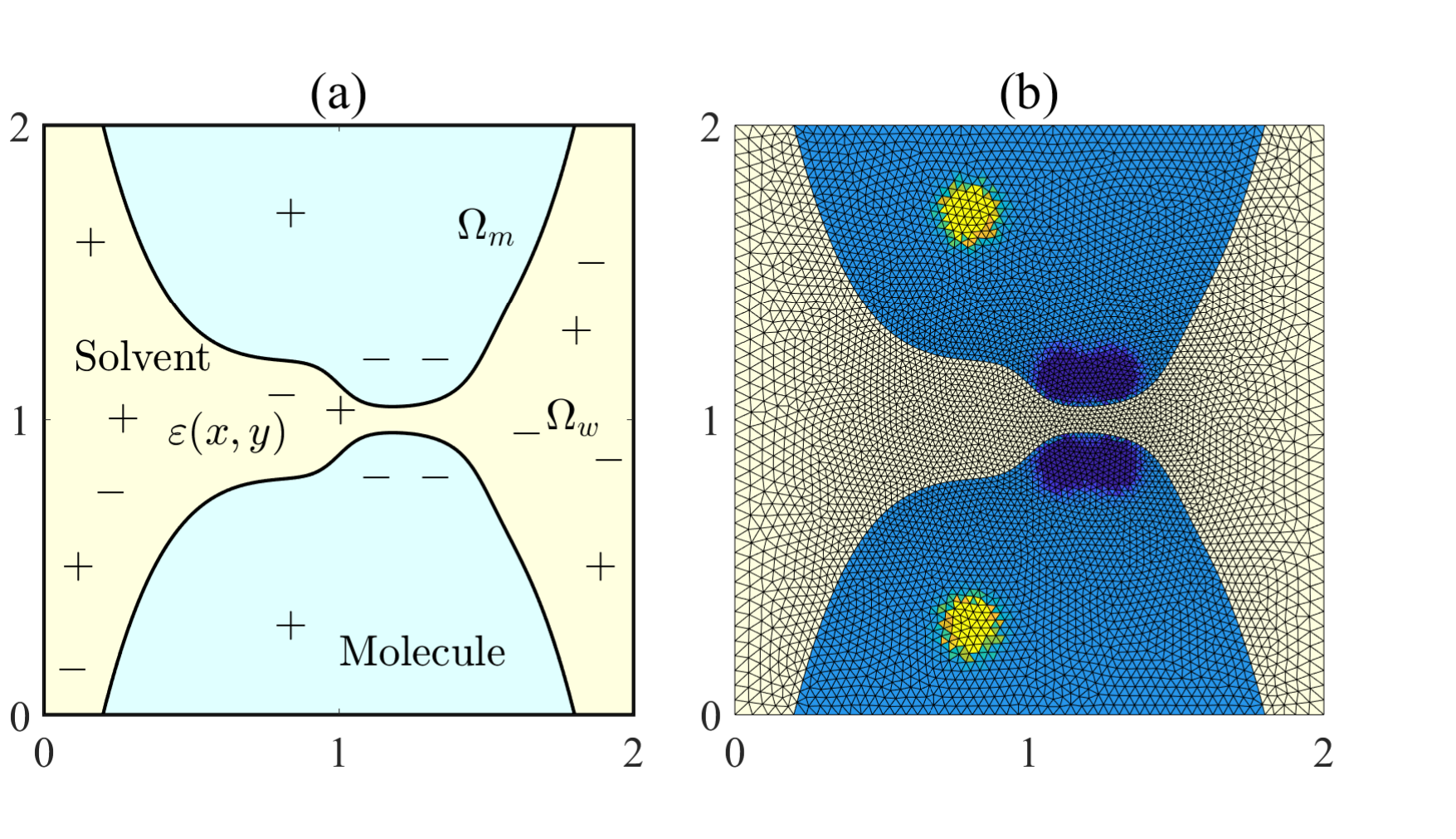} \vspace{-4mm}
	\caption{Illustration of the computational domain $\Omega$. (a) The molecule region contains fixed charges indicated by  plus and minus signs, and the solvent region has mobile ions indicated by plus and minus signs as well; (b) Delaunay triangulation of the molecule and solvent regions with highlighted resolution of fixed charges.}
	\label{f:geom}
\end{figure}

Numerical experiments are conducted to showcase the structure-preserving properties of the algorithm. We consider binary electrolyte solutions with charges $z_1=+1$ and $z_2=-1$, occupying a computational domain $\Omega=[0,2]^2\in \mathbb{R}^2$ under periodic boundary conditions. The domain is divided into a molecule region $\Omega_m$ and a solvent region $\Omega_w$, as depicted in Figure~\ref{f:geom} (a).
The space-dependent dielectric coefficient is set as
$$
\varepsilon(x,y)=\begin{cases}
	2,\quad (x,y)\in\Omega_m \\
	\frac{76}{1 + e^{22 (1 - 5|x-1.21|)}}+ 2,\quad (x,y)\in\Omega_w,
\end{cases}
$$
where the dielectric inhomogeneity mimics the dielectric response of the solvent along the narrow necking region.
The distribution of fixed charges in the molecule area $\Omega_m$ is given by
$$
\begin{aligned}
	f=&\omega\left\{-e^{-250\left[(x-1.12)^2+(y-0.85)^2\right]}-e^{-250\left[(x-1.12)^2+(y-1.15)^2\right]}\right.\\
	&-2e^{-250\left[(x-1.28)^2+(y-0.85)^2\right]}-e^{-250\left[(x-1.28)^2+(y-1.15)^2\right]}\\
	&\left.+e^{-250\left[(x-0.7)^2+(y-0.5)^2\right]}+e^{-250\left[(x-0.7)^2+(y-1.5)^2\right]}\right\},
\end{aligned}
$$
where $\omega$ regulates the strength of fixed charges. The spatial distribution of the fixed charges is illustrated in Figure~\ref{f:geom} (b).
%, and is chosen as $\omega=2$ or $10$ in the following numerical tests.
Unless stated otherwise, the following parameters in numerical simulations are taken:
%The radius of ions and solvent are given by
$a_1=1.10$, $a_2=1.05$, $a_0=1.25$, $\omega=2$, $\kappa=0.01$, and $ \chi=200$.
%$\kappa=0.01$ or $\kappa=0.001$ .
The initial ionic concentrations are uniform: $c_s(x,y)=0.1,(x,y)\in\Omega_w,s=1,2$.

\begin{figure}[t!]
	\centering
	\includegraphics[scale=0.31]{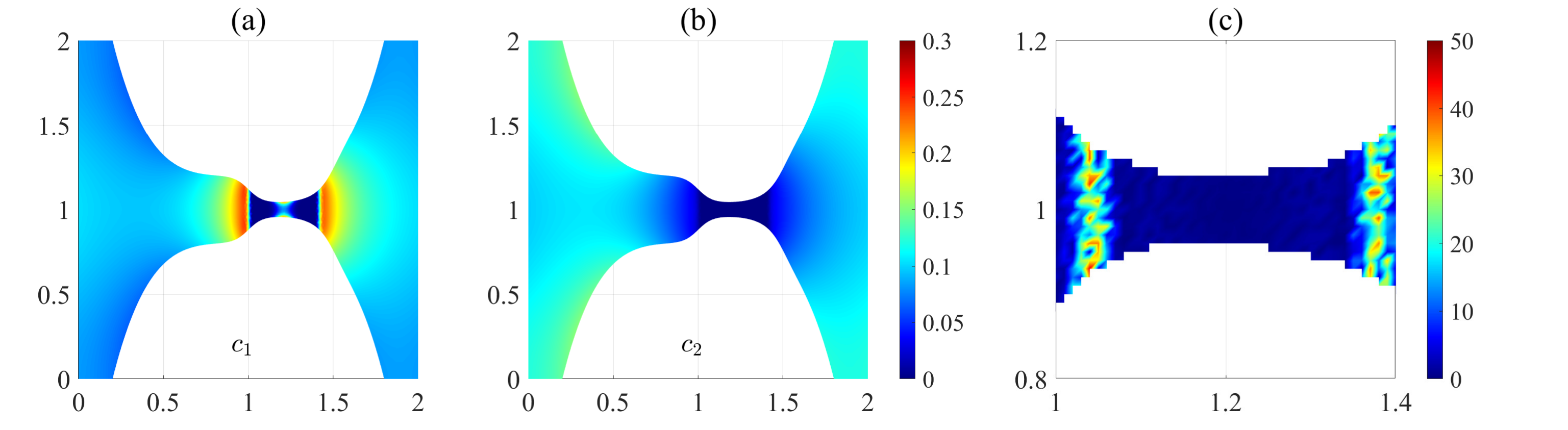} %\vspace{-4mm}
	\includegraphics[scale=0.31]{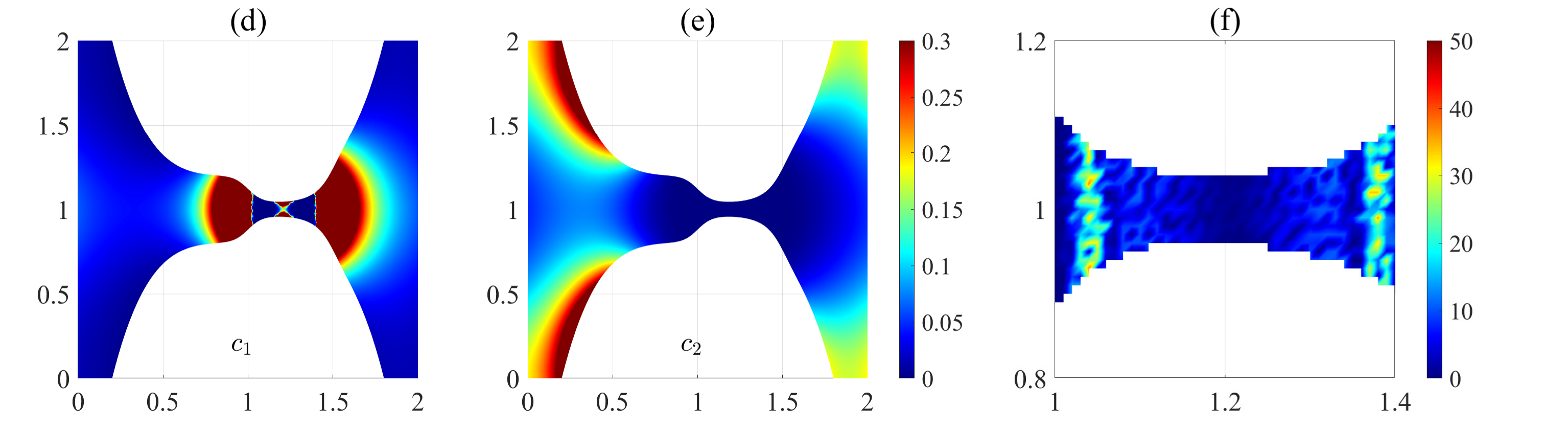}
	\vspace{-2mm}
	\caption{The concentration of cations (a,d), anions (b,e), and the distribution of $\text{P}_{1,ij}$ \reff{Pe} in the narrowest necking region (c,f) with $\kappa=0.01$ and various strength of fixed charges: (a-c) $\omega=2$ and (d-f) $\omega=10$.}
	\label{f:res}
\end{figure}

The proposed algorithm is utilized to solve the steady state of a heterogeneously charged system on a non-uniform mesh, as illustrated in Figure~\ref{f:geom} (b). It is important to note that the mesh is refined at the interface of two regions, the location of fixed charges, and narrow necking regions. Figure~\ref{f:res} displays the steady-state ionic distribution with $\kappa=0.01$. The plots reveal that both cations and anions are depleted from the narrow necking region due to the dominant repulsion caused by the sharp dielectric inhomogeneity.   Away from the necking region, the cation concentration peaks due to the electrostatic attraction exerted by the negative fixed charges in the molecule region.  With stronger fixed charges, as shown in the lower panel,  the concentration of counterions gets higher and even reaches saturation values. Such results clearly demonstrate that the steady-state ionic distribution is an outcome of the competition between electrostatic interactions and dielectric depletion.

%The mobile ions are transported governing by equation~\eqref{ss-manp}. We use the Algorithm~\ref{alg} and non-uniform mesh given in Figure~\ref{f:geom} (b) to compute the steady state of this process. The distributions of ions are obtained as depicted in Figure~\ref{f:res} and Figure~\ref{f:res_k}.
%As shown in the first column of Figure~\ref{f:res} and Figure~\ref{f:res_k}, the competition of the attractive term from electrostatic interaction and the repulsive term from Born energy barrier eventually lead to this situation where the distributions of cation reach the saturated concentrations in the narrowest part of the nanopore, but two small exclusion areas where the dielectric constant varying fast arise in two openings of cation aggregation.

We proceed to evaluate the influence of the rescaled Debye length, denoted as $\kappa$, on the concentrations of ions. A thorough comparison between Figure~\ref{f:res_k} and Figure~\ref{f:res} reveals that a smaller value of $\kappa$ leads to more pronounced boundary layers within the electric double layers (EDLs). The competition between electrostatic interactions and the dielectric effect becomes significantly intensified. For instance, in Figure~\ref{f:res_k} (a), the dielectric depletion region appears as two small regions, exhibiting a noticeable reduction compared to Figure~\ref{f:res} (a). Moreover, in Figure~\ref{f:res} (d), the dielectric depletion region completely disappears when $\kappa=0.001$ and $\omega=10$. Notably, wider saturation layers become evident in Figure~\ref{f:res_k} (e) as the electrostatic interactions are enhanced.

%The small Debye length $\kappa$ and large fixed charge strength $\omega$ both lead to stronger electrostatic attraction. The slight differences are that the effect of Debye length mainly reflects in the region of attraction or exclusion, but the effect of fixed charge mainly reflects in the strength of free charges that attract. Significantly, the exclusion areas in two openings of the narrowest part are disappearing for too strong electrostatic interaction with $\kappa=0.001$ and $\omega=10$.
%In the second column of Figure~\ref{f:res} and Figure~\ref{f:res_k}, the anion is repelled in the narrow parts of the nanopore, attributed to the combined repulsive effects of Born correlations and negative fixed charges.
%Meanwhile, the anion manifests itself as the structure of the electric double layer (EDL) around the left side of the boundary of $\Omega_w$ for the reason of electrostatic interaction.
%The concentration of anion along the left side of the boundary of $\Omega_w$ is higher with stronger fixed charges and the same Debye length $\kappa$ from subgraphs (b) and (e) in Figure~\ref{f:res} and Figure~\ref{f:res_k}.
%The region of anion aggregation is shrinkage with smaller Debye length $\kappa$ from subgraphs (b) or (e) of two figure results, showing the same effects as cation distribution.

%Attributed to the effects of Born correlations, two small exclusion areas where the dielectric constant varying fast arise in the cation aggregation.

\begin{figure}[htpp]
	\centering
	\includegraphics[scale=0.27]{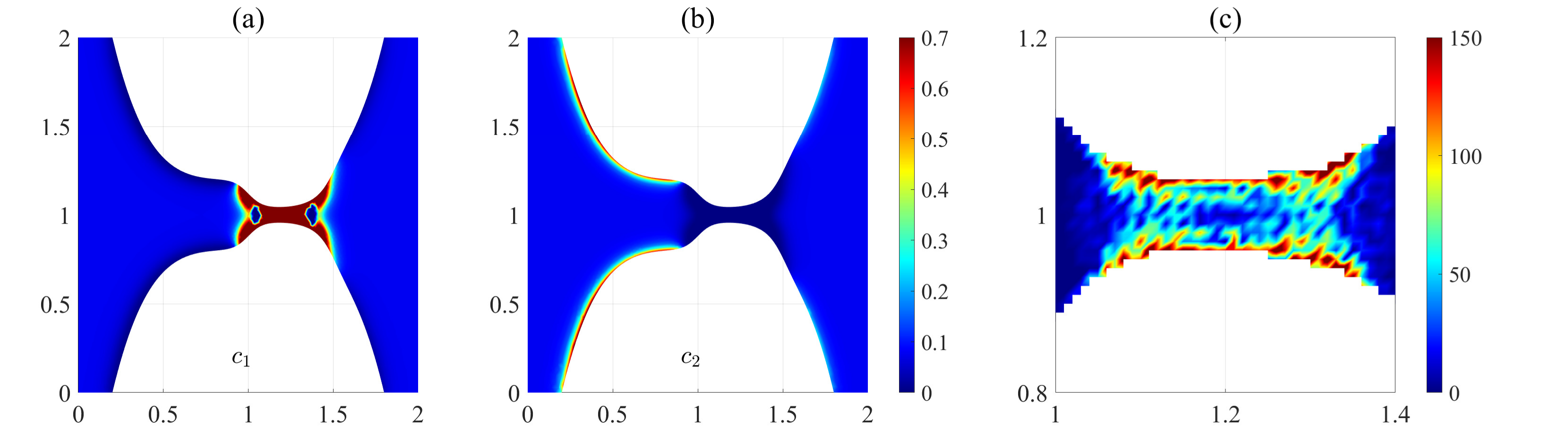}
	\includegraphics[scale=0.27]{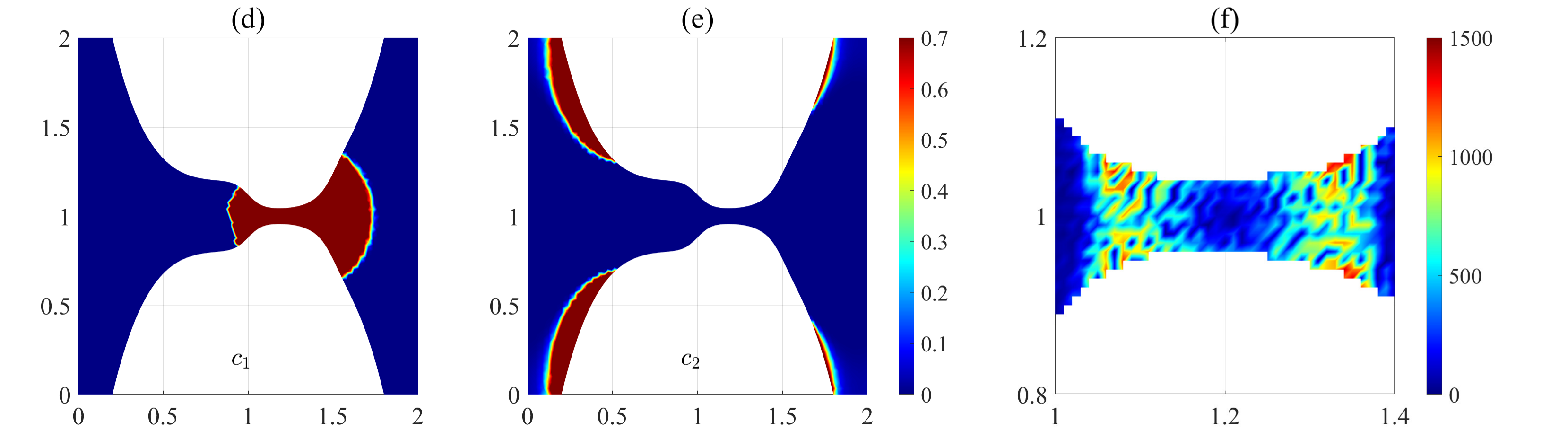}
	\vspace{-2mm}
	\caption{The concentration of cations (a,d), anions (b,e), and the distribution of $\text{P}_{1,ij}$ \reff{Pe} in the narrowest necking region (c,f) with $\kappa=0.001$ and various strength of fixed charges: (a-c) $\omega=2$ and (d-f) $\omega=10$.}
	\label{f:res_k}
\end{figure}

As observed previously, the presence of a small Debye length and strong fixed charges can result in the formation of sharp boundary layers within the EDLs, posing a significant numerical challenge in simulating charged systems~\cite{ZhangWangZhangLu_JCP22}. Additionally, another difficulty arises when the ionic concentration reaches saturation values~\cite{Horng2022Frontiers}.  We here assess the performance of the proposed local structure-preserving relaxation method in solving such challenging scenarios. To facilitate a quantitative comparison, we first introduce a dimensionless number to measure sharp changes across mesh nodes for each species:

%The discrete P\'{e}clet number which is used to measure the effects of convection against diffusion is defined as
\begin{equation}\label{Pe}
\text{P}_{s,ij}=\left| -z_s E_{ij}^D d_{ij} +d_{ij} \operatorname{grad}_D \left[-\frac{a_s^3}{a_0^3} \log \left(1-\sum_{v=1}^M a_v^3c_v^h\right) + \frac{\chi z_s^2}{a_s} \left(\frac{1}{\varepsilon^h}\right)\right]_{ij}^D \right|,
\end{equation}
which is a counterpart to the discrete P\'{e}clet number that quantifying the dominance of convection over diffusion in the time-dependent Nernst-Planck equations. Normally, the classical central differencing based numerical schemes start to give rise to severe spurious oscillations in the strong convection regime where $\text{P}_{s,ij}$ is greater than $2$.

The spatial distribution of $\text{P}_{1,ij}$ is depicted in the third column of Figures~\ref{f:res} and~\ref{f:res_k} for $\kappa=0.01$ and $\kappa=0.001$, respectively. It is evident that the proposed local structure-preserving relaxation method yields accurate ionic concentrations without any spurious oscillations, even when the system reaches ionic saturation and $\text{P}_{1,ij}$ reaches values as high as 1500 in the narrow necking region for $\kappa=0.001$ and $\omega=10$. This robust performance of the proposed method underscores its suitability for simulating charged systems.

%In this example, from Figure~\ref{f:res} and~\ref{f:res_k} (c,f) the P\'{e}clet number is relatively high, which can reach as high as over $1500$ for parameters $\kappa=0.001$ and $\omega=10$ in the narrow spots, where many numerical methods fail down, especially existing saturation concentrations. There is no oscillation in the results by using our method.

Furthermore, we assess the ability of the proposed algorithm to preserve important structural properties, including the discrete Gauss's law, mass conservation of ions, and monotonic energy dissipation.

%As proven in theorem~\ref{thm-masscons}, the Gauss law~\eqref{disgausslaw} and total mass of concentrations~\eqref{dismasscons} are perfectly conservative within machine precision all the time from Table~\ref{table-mass}.
Define the residual errors
\[
\begin{aligned}
&R_{c_s}:=\left |\sum_{i=1}^{M_D} c_{s,i}^{D} m(V_i)-N_s \right |, ~s=1, 2,\\
&R_{E}:=\left\|\left\{2\kappa^2 \left(\operatorname{div}_D \mathcal{A}(\varepsilon^h) \bm{E}^h\right)_i^D-\sum_{s=1}^M  z_s c_{s,i}^D -f^{D}_{i}\right\}_{i=1}^{M_D}\right\|_\infty.
\end{aligned}
\]
Table~\ref{table-mass} presents the maximum values of $R_{c_1}$, $R_{c_2}$, and $R_{E}$ obtained during the relaxation process. It is evident that the total mass of both cations and anions, as well as the discrete Gauss's law, are satisfied with machine precision throughout the entire relaxation process. Additionally, Figure~\ref{f:prop} (a) illustrates the evolution of the normalized discrete energy $F_h$ for various combinations of $\kappa$ and $\omega$. Notably, the discrete energy monotonically decays during the relaxation process. Interestingly, we observe that as the Debye length $\kappa$ decreases, the relaxation process requires more iterations due to the development of sharper boundary layers. Moreover, Figure~\ref{f:prop} (b) demonstrates that the numerical concentrations of both ions and solvent can become lower than $10^{-10}$ while remaining positive throughout the relaxation process.

%As listed in Table~\ref{table-mass}, the total mass of both cations and anions are conserved up to machine precision. The
%As the derivation of the variational algorithm, the discrete energy is monotonically decreasing within the time. And the small $\kappa$ increases the steps of the algorithm. Moreover, the numerical concentrations remain positive during the evolution, especially for the concentration of solvent $c_0$. It is due to the intrinsic conservative property by moving the ionic concentrations physically in our method.

%$|\sum_{i=1}^{M_D} c_{1,i}^{D} m(V_i)-N_1|$, $|\sum_{i=1}^{M_D} c_{2,i}^{D} m(V_i)-N_2|$, and  $\left\|\left\{2\kappa^2 \left(\operatorname{div}_D \mathcal{A}(\varepsilon^h) \bm{E}^h\right)_i^D-\sum_{s=1}^M  z_s c_{s,i}^D -f^{D}_{i}\right\}_{i=1}^{M_D}\right\|_\infty$

\begin{table}[t!]
	\centering
	\setlength{\belowcaptionskip}{0pt}%
	\begin{tabular}{cccc}	
		\hline \hline
		& $R_{c_1}$ &  $R_{c_2}$ &  $R_{E}$ \\
		\hline
		$\kappa=0.01,\omega=2$ & 9.1593E-16   & 8.8818E-16 & 2.2709E-18    \\
		$\kappa=0.01,\omega=10$ &9.1593E-16 & 1.2212E-15 & 8.6736E-18\\
		$\kappa=0.001,\omega=2$ & 9.9920E-16 &  8.8818E-16 & 5.9360E-18 \\
		$\kappa=0.001,\omega=10$ &8.0491E-16& 1.0825E-15 & 1.2523E-17\\
		\hline	\hline
	\end{tabular}
	\caption{The maximum $R_{c_1}$, $R_{c_2}$, and $R_{E}$ during the relaxation process.}
	\label{table-mass}	
\end{table}

\begin{figure}[b!]
	\centering
	\includegraphics[scale=0.42]{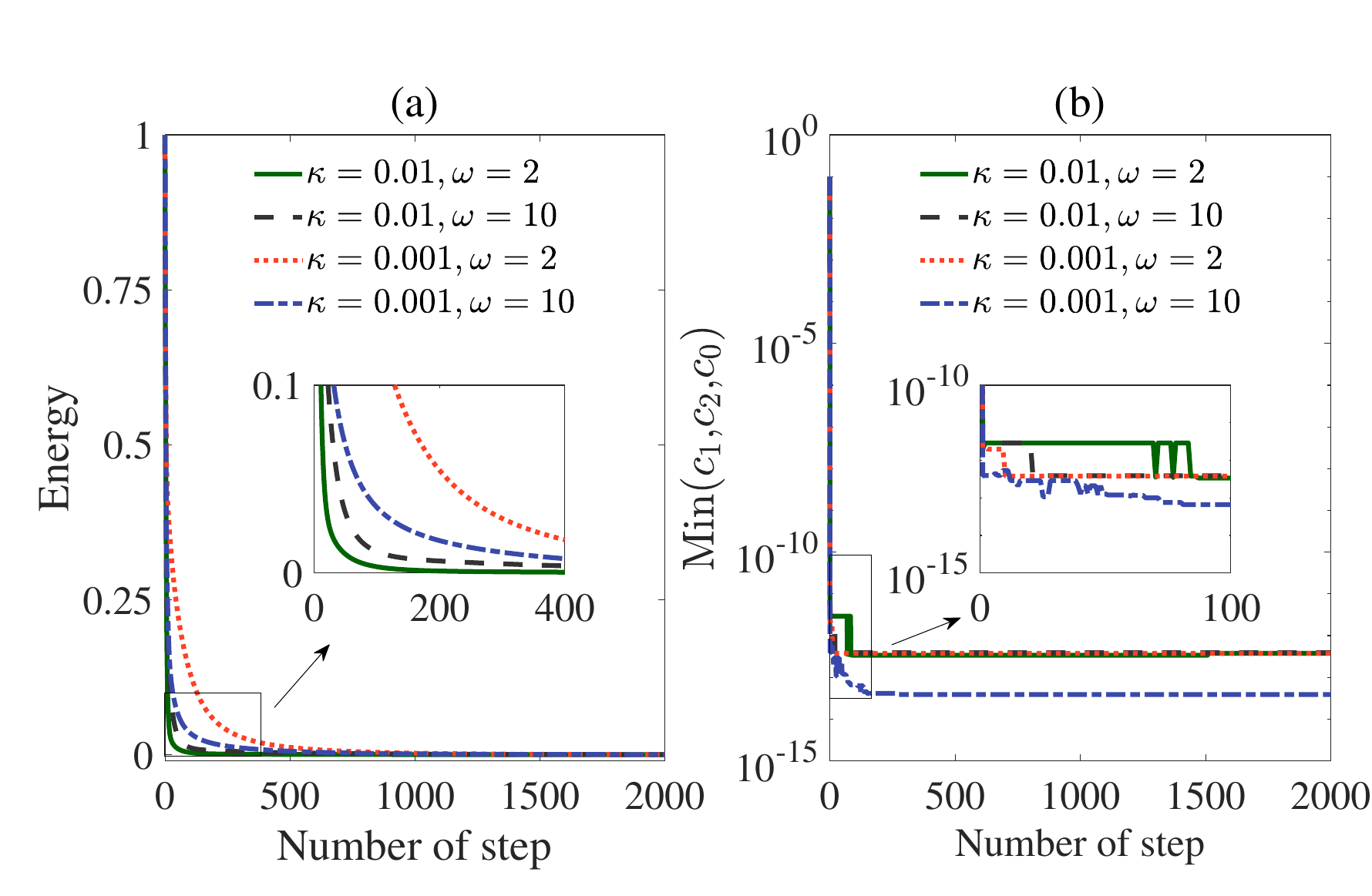} \vspace{-2mm}
	\caption{Evolution of the normalized discrete energy $F_h$ (a), and the minimum concentration of ions and solvent on Delaunay nodes (b) against relaxation steps with various combinations of $\kappa$ and $\omega$. }
	\label{f:prop}
\end{figure}

\section{Conclusions}\label{s:Con}

This paper presents a local relaxation method for tackling the numerical challenges associated with the steady-state modified Poisson--Nernst--Planck (PNP) equations in charged systems, considering ionic steric effects and the Born solvation energy. The presence of sharp boundary layers and ionic concentration saturation poses significant difficulties in simulations of these equations. By formulating the steady-state problem as a constraint optimization problem, we treat the ionic concentrations on unstructured Delaunay nodes as fractional particles moving along the edges between nodes. Consequently, the electric fields are updated to minimize the objective free energy while preserving the discrete Gauss's law. To ensure curl-free electric fields, we have developed a local relaxation method that inherently respects the discrete Gauss's law on unstructured meshes. Numerical analysis demonstrates that the optimal mass of the moving fractional particles ensures the positivity of both ionic and solvent concentrations. Furthermore, the free energy of the charged system monotonically decreases during the successive updates of ionic concentrations and electric fields. Further numerical results showcase the accuracy, numerical positivity, and dissipation of free energy achieved by the proposed method. Simulations of charged systems with small Debye length highlight the robustness of the relaxation method in handling sharp boundary layers and ionic saturation, underscoring its potential in molecular and plasma simulations.

\section*{Acknowledgements}

The last author expresses gratitude to Dr.\,Xiaozhe Hu for bringing the literature on mimetic finite difference to our attention.  This work is supported by the CAS AMSS-PolyU Joint Laboratory of Applied Mathematics. Z. Qiao's work is partially supported by the Hong Kong Research Grants Council (RFS Project No. RFS2021-5S03 and GRF project No. 15302122) and the Hong Kong Polytechnic University internal grant No. 1-9B7B. The work of Z. Xu and Q. Yin is partially supported by  NSFC (grant No. 12071288), Science and Technology Commission of Shanghai Municipality (grants No. 20JC1414100 and 21JC1403700), the Strategic Priority Research Program of CAS (grant No. XDA25010403) and the HPC center of Shanghai Jiao Tong University. S. Zhou's work is partially supported by the National Natural Science Foundation of China 12171319 and Science and Technology Commission of Shanghai Municipality (grant Nos. 21JC1403700).

\bibliographystyle{plain}
\bibliography{refbib}

\end{document}